\documentclass[10pt,a4paper,oneside,reqno]{amsart}
\usepackage{setspace}
\usepackage[totalwidth=14.3cm,totalheight=22.6cm]{geometry}
\usepackage[T1]{fontenc}
\usepackage[english]{babel}
\usepackage[autostyle]{csquotes}
   \MakeAutoQuote{‘}{’}
   \MakeOuterQuote{"}
\usepackage{graphicx}
\usepackage[all,cmtip]{xy} 
\SelectTips{eu}{12} 
\usepackage{caption}
\usepackage{epsfig}
\usepackage{amsmath,amssymb,amscd,amsthm}
\usepackage{subfigure}
\usepackage{latexsym}
\usepackage{graphics}
\usepackage{colortbl} 
\usepackage{color}
\usepackage{ae}
\usepackage{enumitem}
\usepackage[all]{xy}
\usepackage{scalerel}
\usepackage{tikz}
\usepackage{cancel}
\usepackage{bbm,amsbsy}
\usepackage{comment}
\usepackage{mathrsfs}  
\usepackage[colorlinks=true,pagebackref=true]{hyperref}
\usepackage[normalem]{ulem}
\usepackage{nicefrac}
\usepackage{xfrac}
\usepackage{faktor}
\usepackage{tikz-cd} 
\usepackage{systeme}
\usepackage{nomencl}
\usepackage[normalem]{ulem}
\makenomenclature

\makeatletter
\newtheorem*{rep@theorem}{\rep@title}
\newcommand{\newreptheorem}[2]{%
\newenvironment{rep#1}[1]{%
 \def\rep@title{#2 \ref{##1}}%
 \begin{rep@theorem}}%
 {\end{rep@theorem}}}
\makeatother

\makeatletter
\newtheorem*{rep@cor}{\rep@title}
\newcommand{\newrepcor}[2]{%
\newenvironment{rep#1}[1]{%
 \def\rep@title{#2 \ref{##1}}%
 \begin{rep@cor}}%
 {\end{rep@cor}}}
\makeatother

\makeatletter
\newtheorem*{rep@prop}{\rep@title}
\newcommand{\newrepprop}[2]{%
\newenvironment{rep#1}[1]{%
 \def\rep@title{#2 \ref{##1}}%
 \begin{rep@prop}}%
 {\end{rep@prop}}}
\makeatother

\newtheorem{cor}{Corollary}[subsection]
\newtheorem{corx}{Corollary}

\newtheorem{theorem}[cor]{Theorem}

\newtheorem{prop}[cor]{Proposition}

 \allowdisplaybreaks
\newrepcor{cor}{Corollary}
\newreptheorem{theorem}{Theorem}
\newrepprop{prop}{Proposition}

\newtheorem{lemma}[cor]{Lemma}

\theoremstyle{definition}
\newtheorem{defi}[cor]{Definition}
\theoremstyle{remark}

\newtheorem{remark}[cor]{Remark}
\newtheorem*{remark*}{Remark}

\newtheorem*{notation*}{Notation}
\theoremstyle{plain}

\newtheorem{principal}[corx]{Theorem}

\newlist{steps}{enumerate}{1}
\setlist[steps, 1]{itemsep=8pt,leftmargin=0cm,itemindent=.5cm,labelwidth=\itemindent,labelsep=0cm,align=left,label = \textbf{\emph{Step \arabic*}:\,}}

\makeatletter
\newcommand{\myitem}[1]{%
\item[#1]\protected@edef\@currentlabel{#1}%
}
\makeatother

\begin{document}\raggedbottom

\setcounter{secnumdepth}{3}
\setcounter{tocdepth}{2}

\title{Harmonic Extension of Weil-Petersson Circle Homeomorphisms}
\thanks{}

\author[Farid Diaf, Vladimir Markovi\'c, Abderrahim Mesbah]{Farid Diaf \and Vladimir Markovi\'c \and Abderrahim Mesbah}

\address{Farid Diaf: Institut de Recherche Mathématique Avancée, UMR 7501, Université de Strasbourg et CNRS, 7 rue René Descartes, 67000 Strasbourg, France.} 
\email{f.diaf@unistra.fr}

\address{Vladimir Markovi\'c: YMSC, Tsinghua University.}

\address{Abderrahim Mesbah:
Beijing Institute of Mathematical Sciences and Applications, Beijing, China.}
\email{abderrahimmesbah@bimsa.cn}

\begin{abstract}
In this paper, we study Weil--Petersson circle homeomorphisms from the viewpoint of harmonic maps. We prove that a homeomorphism $\varphi:\mathbb S^1\to\mathbb S^1$ is Weil--Petersson if and only if its unique quasiconformal harmonic extension to the hyperbolic disk $\mathbb D$ has square-integrable Beltrami differential.

Our approach is based on the anti-holomorphic $L^2$-energy of harmonic maps. We show that this energy is finite for the quasiconformal harmonic extension of every Weil--Petersson circle homeomorphism, and that, among suitable quasiconformal extensions, the harmonic extension minimizes this energy.
\end{abstract}

\maketitle
\tableofcontents

\section{Introduction}
The Weil--Petersson Teichm\"uller space is an important subspace of the universal Teichm\"uller space. In their seminal paper, Takhtajan and Teo \cite{TakhtajonTeo} introduced an infinite-dimensional K\"ahler manifold structure on the universal Teichm\"uller space, which endows it with the structure of a topological group and decomposes it into uncountably many connected components. The Weil--Petersson Teichm\"uller space is precisely the identity component. The significance of the Weil--Petersson Teichm\"uller space extends beyond Teichm\"uller theory to several areas of mathematics, including probability theory \cite{wang-inven,lowner2}, hyperbolic geometry \cite{bishop,renormilized}, operator theory \cite{ShenWeilPeterssonTeichmullerspace,arXiv:lowner-welding}, and anti-de Sitter geometry \cite{optimisation,arXiv:maximalsurface}.

On the other hand, harmonic map theory is a widely studied subject in Teichm\"uller theory and has become an important tool in modern geometry, playing a fundamental role in the proofs of numerous notable results; see, for instance, \cite{ells-sampson,Schoen1993,Corlette,wolf-harmonic,Li-Tam,markovic-schoenconjecture,benoist-hulin-jems}.

The goal of this paper is to characterize the Weil--Petersson Teichm\"uller space in terms of harmonic map theory.

\subsection{Harmonic extension}

The universal Teichm\"uller space is defined as the space of quasisymmetric homeomorphisms of the circle modulo the action of the group of orientation-preserving isometries of the hyperbolic disk $\mathbb D$ by precomposition, where the action on the circle is given by M\"obius transformations. Recall that a homeomorphism $\varphi:\mathbb S^1\to\mathbb S^1$ is called quasisymmetric if it admits a quasiconformal extension $\Phi:\mathbb D\to\mathbb D$ to the hyperbolic disk.

Recall that the Beltrami differential of a quasiconformal map $\Phi:\mathbb{D}\to\mathbb{D}$ is defined by
\[
\mu_{\Phi}:=\frac{\Phi_{\overline z}}{\Phi_z},
\]
where $\Phi_z$ and $\Phi_{\overline z}$ denote the usual complex derivatives with respect to $z$ and $\overline z$, respectively. We will use the following definition of Weil--Petersson circle homeomorphisms.
\begin{defi}\label{defi:WP}
A homeomorphism $\varphi:\mathbb{S}^1\to\mathbb{S}^1$ is called \emph{Weil--Petersson} if it admits a quasiconformal extension $\Phi:\mathbb{D}\to\mathbb{D}$ such that
\[
\int_{\mathbb D}\bigl|\mu_{\Phi}\bigr|^2\,dA_{\mathbb D}<\infty,
\]
where $dA_{\mathbb{D}}=\frac{4\,dx\,dy}{(1-\lvert z\rvert^2)^2}$
denotes the hyperbolic area form on $\mathbb{D}$.
\end{defi}

The construction of \emph{conformally natural} extensions of circle homeomorphisms to the hyperbolic disk is a classical problem in Teichm\"uller theory. Well-known examples include the Douady--Earle extension \cite{DouadyEarle1986}, the harmonic extension \cite{markovic-schoenconjecture}, and the minimal Lagrangian extension \cite{Bonsante-Schlenker}. Given such extensions, a natural line of investigation is to study their Beltrami differentials in relation to the regularity or complexity of the boundary homeomorphism. For Weil--Petersson circle homeomorphisms, Cui proved in \cite{cui2000integrably} that the Beltrami differential of the Douady--Earle extension is square-integrable. More recently, an analogous result for the minimal Lagrangian extension was established in \cite{arXiv:maximalsurface}.

Our main result provides a characterization of Weil--Petersson circle homeomorphisms in terms of harmonic extensions. It was shown by the second author \cite{markovic-schoenconjecture} that every quasisymmetric homeomorphism admits a quasiconformal harmonic extension to the hyperbolic disk, completing a conjecture of Schoen \cite{Schoen1993}. The uniqueness of this extension was previously established by Li--Tam \cite{Li-Tam}.

\begin{principal}\label{mainthm}
A homeomorphism $\varphi:\mathbb{S}^1\to\mathbb{S}^1$ is Weil--Petersson if and only if the Beltrami differential of its unique quasiconformal harmonic extension
$\Phi:\mathbb{D}\to\mathbb{D}$ is square-integrable; that is,
\[
\int_{\mathbb D}\bigl|\mu_{\Phi}\bigr|^2\,dA_{\mathbb D}<\infty.
\]
\end{principal}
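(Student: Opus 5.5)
One direction is immediate: if the Beltrami differential of the quasiconformal harmonic extension $\Phi$ is square-integrable, then $\Phi$ is itself a quasiconformal extension as required in Definition~\ref{defi:WP}, so $\varphi$ is Weil--Petersson. All the work is in the converse. For that I will use the anti-holomorphic $L^2$-energy indicated in the abstract. For a quasiconformal map $F:\mathbb D\to\mathbb D$, set
\[
E(F)=\int_{\mathbb D}\frac{|F_{\overline z}|^2\,\rho(F)^2}{\rho(z)^2}\,dA_{\mathbb D}=\int_{\mathbb D}|\overline\partial F|^2\,dx\,dy,
\]
where $\rho$ is the hyperbolic density and the norm in the second expression is taken with respect to the hyperbolic metric on the target. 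Writing $J$ for the hyperbolic Jacobian, we have $|\overline\partial F|^2=|\mu_F|^2\,\rho(F)^2|F_z|^2$ and $J=\rho(F)^2(|F_z|^2-|F_{\overline z}|^2)$. If $F$ is $K$-quasiconformal with $K$-bi-Lipschitz-comparable hyperbolic stretching, then the density $|\overline\partial F|^2$ is comparable to $|\mu_F|^2$ times a bounded positive factor relative to $dA_{\mathbb D}$. The plan is therefore to reduce square-integrability of $\mu$ to finiteness of $E$ for maps with bounded hyperbolic distortion.

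\textbf{Step 1.} Start from a Weil--Petersson $\varphi$ and choose a good competitor extension $\Psi$, namely the Douady--Earle extension. By Cui's theorem \cite{cui2000integrably}, $\mu_\Psi\in L^2(dA_{\mathbb D})$. Moreover, $\Psi$ is a quasi-isometric diffeomorphism with $|\partial\Psi|$ bounded above and below in hyperbolic terms, so $E(\Psi)<\infty$.

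\textbf{Step 2.} Prove the minimization property: $E(\Phi)\le E(\Psi)$ for the harmonic extension $\Phi$ and any suitable competitor $\Psi$ with the same boundary values. The approach is through the standard identity $|\partial F|^2-|\overline\partial F|^2=J(F)$. Integrated over an exhaustion by discs $D_r$, the Jacobian term gives the hyperbolic area of $F(D_r)$, which differs between $\Phi$ and $\Psi$ only by a boundary contribution. The argument then splits into two parts.
\begin{itemize}
\item Because the Dirichlet energy is infinite, I will work with a renormalized energy. Using convexity of the energy along the geodesic homotopy $F_t$ from $\Psi$ to $\Phi$ in the nonpositively curved target, harmonicity of $\Phi$ gives
\[
\tfrac{d}{dt}\Big|_{t=1}\int_{D_r} e(F_t)=\text{boundary term}.
\]
\item Subtracting the area identity then yields $E_{D_r}(\Phi)\le E_{D_r}(\Psi)+o(1)$ as $r\to1$.
\end{itemize}

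\textbf{Step 3, the main obstacle.} The hard part is controlling these boundary terms on $\partial D_r$. They involve $d(\Phi,\Psi)$ and derivatives of both maps. By Markovi\'c's construction and Li--Tam, $d_{\mathbb D}(\Phi,\Psi)$ is bounded, but that alone gives only $O(\text{length}(\partial D_r))$, which blows up. My first attempt will be to show that $d(\Phi(z),\Psi(z))\to0$ in an $L^2$-averaged sense as $|z|\to1$, with a rate tied to the Weil--Petersson condition. For this I would bound $d(\Phi,\Psi)$ using a Bochner/maximum-principle estimate, $\Delta d\ge c\,d$ off a compact set. The forcing from the tension of $\Psi$ is controlled by $|\mu_\Psi|$, and $\tau(\Psi)\in L^2$ should follow from Cui's estimates via a Cauchy--Schwarz argument. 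An alternative, if this fails, is to avoid boundary terms altogether by using the $L^2$ tension bound together with the Hopf differential $\Phi^*$ (whose $(2,0)$ part is holomorphic) and a Green's formula on the whole disk.

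\textbf{Step 4.} Conclude using $E(\Phi)<\infty$. The harmonic extension of a quasisymmetric map is a quasi-isometry with $|\partial\Phi|$ bounded below (Markovi\'c, Tam--Wan), so $|\mu_\Phi|^2\lesssim|\overline\partial\Phi|^2$ pointwise against $dA_{\mathbb D}$. Hence $\int_{\mathbb D}|\mu_\Phi|^2\,dA_{\mathbb D}<\infty$.
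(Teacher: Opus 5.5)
Your easy direction, the use of the anti-holomorphic $L^2$-energy, the Douady--Earle competitor with Cui's theorem, and the last step via $|\partial\Phi|\ge 1$ all agree with the paper. The gap is in Steps~2--3. An inequality $\int_{\mathbb D}|\overline{\partial}\Phi|^2\,dA_{\mathbb D}\lesssim\int_{\mathbb D}|\overline{\partial}\Psi|^2\,dA_{\mathbb D}$ on the non-compact disk is the whole content of the hard direction, and you only sketch candidate attacks on it.

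The attack you put first also misplaces the difficulty. Your two boundary contributions are the convexity term $\int_{\partial D_r}\langle d\Phi(\nu),V\rangle\,ds_h$ and the difference of the hyperbolic areas of $\Phi(D_r)$ and $\Psi(D_r)$. Each can be of size $\int_{\partial D_r}d(\Phi,\Psi)\,ds_h$, and no decay of $d(\Phi,\Psi)$ compatible with $\mu_\Psi\in L^2$ makes this small. For example, take $\varphi=\Phi=\mathrm{id}$ and let $\Psi$ be the time-$\epsilon$ flow of the radial field $W=z(1-|z|^2)^{1+s}\partial_z$ with $\tfrac12<s<1$. Then $|\mu_\Psi|=O(\epsilon(1-|z|^2)^s)\in L^2(dA_{\mathbb D})$, but $d(\Phi(z),\Psi(z))\asymp\epsilon(1-|z|)^s$ near $\mathbb S^1$, so both terms are $\asymp\epsilon(1-r)^{s-1}\to\infty$.

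What actually helps is an exact first-order cancellation. Write $e=|\partial F|^2+|\overline\partial F|^2$ and $J=|\partial F|^2-|\overline\partial F|^2$. The first variation of $\int_{D_r}(e-J)\,dA_{\mathbb D}=2\int_{D_r}|\overline\partial F|^2\,dA_{\mathbb D}$ at $\Phi$ has boundary integrand bounded by $2|\overline\partial\Phi|\,d(\Phi,\Psi)$. Using this requires two further steps:
\begin{itemize}
\item[(a)] controlling the second-order remainder that arises because the area difference is swept out along the whole geodesic homotopy, which needs $d(\Phi,\Psi)$ and its tangential derivative in $L^2(dA_{\mathbb D})$;
\item[(b)] handling $\int_{\partial D_r}|\overline\partial\Phi|\,d(\Phi,\Psi)\,ds_h$, which involves the very integrability of $\overline\partial\Phi$ you are trying to prove, so some bootstrap is needed.
\end{itemize}
None of this is in the proposal. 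Your proposed input $\tau(\Psi)\in L^2$ amounts to $\nabla\mu_\Psi\in L^2$, which Cui's theorem does not give.

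The paper avoids boundary terms entirely by passing to closed surfaces. It truncates $\mu=\mu_{\mathrm{DE}(\varphi)}$ to $\Gamma_n$-invariant $\mu_n$, for cocompact Fuchsian groups $\Gamma_n$ with injectivity radius tending to $\infty$. These satisfy $|\mu_n|\le|\mu|$, $\mu_n\to\mu$ on compacta, and $\sup_n\|\mu_n\|_{C^2_h}<\infty$ (Proposition~\ref{prop: Beltrami_approximation}). The normalized solutions $w_{\mu_n}$ descend to maps between closed hyperbolic surfaces. There, Eells--Sampson together with $\int\operatorname{Jac}=\operatorname{Area}$ gives exact minimality of the anti-holomorphic energy of the harmonic representative $\Phi_n$ (Lemma~\ref{lemma:antihol_is_minimum}), with no boundary at all. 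The uniform $C^2_h$ bound, which is why Proposition~\ref{prop: beltrami_DE} is needed, yields $|\partial w_{\mu_n}|\le C$ via the Schauder estimate (Corollary~\ref{cor:bounded hyp derivatives}). Hence $\int_{F_n}|\overline\partial\Phi_n|^2\,dA_{\mathbb D}\le C^2\int_{\mathbb D}|\mu|^2\,dA_{\mathbb D}$. Letting $n\to\infty$, with $\Phi_n\to\Phi$ in $C^2$ on compacta (Benoist--Hulin), gives $\int_{\mathbb D}|\mu_\Phi|^2\,dA_{\mathbb D}\le C^2\int_{\mathbb D}|\mu|^2\,dA_{\mathbb D}$. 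That inequality with a constant is all Theorem~\ref{mainthm} needs; exact minimality is not required.
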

Closely related to the Beltrami differential is the \emph{Hopf differential} of a harmonic map. We will show in Proposition~\ref{beltrami-L2-Hopf_L2} that, for a quasiconformal harmonic map, the square-integrability of the Beltrami differential is equivalent to the square-integrability of the Hopf differential in a suitable sense.

We note that in \cite[Theorem~5.8]{harmonicdiffeo}, Tam and Wan established a characterization of smooth circle diffeomorphisms in terms of their harmonic extensions. It is known from \cite{TakhtajonTeo} that smooth diffeomorphisms are Weil--Petersson, and that the Weil--Petersson Teichm\"uller space is precisely the completion of the space of smooth diffeomorphisms with respect to the Weil--Petersson topology.~Our result goes beyond the smooth setting by providing a characterization of the full class of Weil--Petersson circle homeomorphisms.

We also recall that Weil--Petersson homeomorphisms are contained in the class of \emph{symmetric homeomorphisms}. The latter are characterized by the existence of a quasiconformal extension which is \emph{asymptotically conformal}, meaning that its Beltrami differential tends to zero toward the boundary circle. It was shown in \cite[Theorem~2]{Symmetric-harmonic} that the quasiconformal harmonic extension of a symmetric homeomorphism is asymptotically conformal.

\subsection{Anti-holomorphic $L^2$-energy}

We now state our next results, which are of independent interest and whose relation to Theorem~\ref{mainthm} will be explained below. A fundamental property of harmonic maps between closed hyperbolic surfaces is that they minimize the \emph{energy} in their homotopy class. In the universal setting, the energy of a harmonic map $\Phi:\mathbb D\to\mathbb D$ is infinite (see Remark~\ref{remark:on_energy}). Our second main result is therefore to show that the anti-holomorphic $L^2$-energy is finite in the setting of Weil--Petersson circle homeomorphisms, and our third main result establishes an analogous minimality property.

Given a map $\Phi:\mathbb D\to\mathbb D$, we define its \emph{anti-holomorphic $L^2$-energy} by
\[
\int_{\mathbb D} \bigl|\overline{\partial}\Phi(z)\bigr|^2\,dA_{\mathbb D},
\]
where
\[
\overline{\partial}\Phi(z)
=
\frac{1-|\Phi(z)|^2}{1-|z|^2}\,\Phi_{\bar z}(z).
\]
Then we show the following.

\begin{principal}\label{mainprop}
A homeomorphism $\varphi:\mathbb S^1\to\mathbb S^1$ is Weil--Petersson if and only if the anti-holomorphic $L^2$-energy of its unique quasiconformal harmonic extension $\Phi:\mathbb D\to \mathbb D$ is finite, that is,
\[
\int_{\mathbb D}\bigl|\overline{\partial}\Phi\bigr|^2\,dA_{\mathbb D}<\infty.
\]
\end{principal}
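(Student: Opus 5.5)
Since the abstract describes a minimization property, the natural plan is to compare the harmonic extension with a known extension that has finite energy, via a convexity/energy-comparison argument. I will write $\Phi$ for the quasiconformal harmonic extension and use the pointwise quantities
\[
|\partial\Phi|=\frac{1-|z|^2}{1-|\Phi|^2}\,|\Phi_z|,\qquad
|\overline\partial\Phi|=\frac{1-|z|^2}{1-|\Phi|^2}\,|\Phi_{\bar z}|,
\]
so that $|\overline\partial\Phi|=|\mu_\Phi|\,|\partial\Phi|$.

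\emph{Easy direction.} Suppose the anti-holomorphic $L^2$-energy is finite. Since $\Phi$ is a quasiconformal harmonic map between hyperbolic disks, it is a quasi-isometry. The Bochner formula together with the maximum principle (Schoen--Yau/Wan-type estimates) gives a uniform lower bound $|\partial\Phi|\ge c>0$. Indeed $\log|\partial\Phi|$ satisfies $\Delta\log|\partial\Phi| = |\partial\Phi|^2-|\overline\partial\Phi|^2-1$ in hyperbolic normalization, and quasiconformality keeps $|\mu_\Phi|\le k<1$. Hence $|\mu_\Phi|\le c^{-1}|\overline\partial\Phi|$ pointwise, so $\mu_\Phi\in L^2(dA_{\mathbb D})$, and $\varphi$ is Weil--Petersson by Definition~\ref{defi:WP}.

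\emph{Hard direction.} Suppose $\varphi$ is Weil--Petersson. I would take a reference extension $\Psi$ with good control, namely the Douady--Earle extension. By Cui it has $\mu_\Psi\in L^2$, and being a bi-Lipschitz quasi-isometric diffeomorphism with bounded hyperbolic derivatives, it has $|\overline\partial\Psi|\in L^2$. The goal is then the minimality statement
\[
\int|\overline\partial\Phi|^2\,dA_{\mathbb D}\le \int|\overline\partial\Psi|^2\,dA_{\mathbb D}.
\]
The identity to exploit is $e(\Phi)=|\partial\Phi|^2+|\overline\partial\Phi|^2$ together with the Jacobian $J(\Phi)=|\partial\Phi|^2-|\overline\partial\Phi|^2$, which gives $|\overline\partial\Phi|^2=\tfrac12\bigl(e(\Phi)-J(\Phi)\bigr)$. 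The Jacobian term is a pullback of the hyperbolic area form, so its integral over a region is topological up to boundary terms. The plan has three steps:
\begin{enumerate}
\item Exhaust $\mathbb D$ by hyperbolic disks $D_r$.
\item Note that $\Phi$ and $\Psi$ are at bounded hyperbolic distance, since both are quasi-isometries with the same boundary values, and in fact their distance tends to $0$ at the boundary.
\item Compare the two quantities $\int_{D_r}(e-J)$ for $\Phi$ and for $\Psi$.
\end{enumerate}
For step 3, consider the geodesic homotopy $\Phi_t$ from $\Psi$ to $\Phi$. Convexity of energy along geodesic homotopies into a negatively curved target, combined with harmonicity of $\Phi$ (vanishing first variation), gives $E_{D_r}(\Phi)\le E_{D_r}(\Psi)+(\text{boundary term on }\partial D_r)$. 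The Jacobian integrals differ only by the signed area swept between $\Phi(\partial D_r)$ and $\Psi(\partial D_r)$, which is again a boundary term.

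\emph{Main obstacle.} The hard part is showing that these boundary terms, of size roughly $\int_{\partial D_r} d(\Phi,\Psi)\,|\nabla\Phi|$ in hyperbolic length, tend to $0$ (or at least are bounded) along some sequence $r\to1$. The length of $\partial D_r$ grows like $e^r$, so we need $d(\Phi,\Psi)$ to decay suitably in an averaged $L^2$ sense. My first attempt would be to prove $d(\Phi,\Psi)\in L^2(dA_{\mathbb D})$ with bounded gradient. This would make $\int_{\partial D_r}d(\Phi,\Psi)\,ds$ integrable in $r$, so it is small for a sequence of radii. To get the $L^2$ bound, use the subharmonicity inequality $\Delta d(\Phi,\Psi)\ge \tanh(d)\cdot(\text{tension of }\Psi)$-type estimates. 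Here the tension field of the Douady--Earle extension should itself lie in $L^2$ for Weil--Petersson $\varphi$, by estimates in the spirit of Cui, and a Poincaré-type inequality on $\mathbb D$ (spectral gap $\lambda_1=1/4$) then yields $d\in L^2$. Once the boundary terms are controlled, the inequality above gives finiteness of the anti-holomorphic $L^2$-energy.
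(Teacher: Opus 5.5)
The easy direction is correct and is the paper's argument. The paper quotes the Wan/Markovi\'c bound $|\partial\Phi|\ge1$, which is what your Bochner/maximum-principle step produces, so $|\mu_\Phi|\le|\overline\partial\Phi|$.

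The hard direction has a genuine gap, exactly at the step you call the main obstacle. In your comparison on $D_r$, you bound both boundary terms only by $C\int_{\partial D_r}d\,ds$, where $d=d(\Phi,\Psi)$. These are the first variation of the energy at $\Phi$ and the area swept between $\Phi(\partial D_r)$ and $\Psi(\partial D_r)$. Their coefficients $|d\Phi|,|d\Psi|$ are of order one, not small; indeed $|\partial\Phi|\ge1$.

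By the coarea formula in hyperbolic polar coordinates,
\[
\int_0^\infty\Bigl(\int_{\partial D_r}d\,ds\Bigr)\,dr=\int_{\mathbb D}d\,dA_{\mathbb D},
\]
so what you actually need is $d\in L^1(dA_{\mathbb D})$, not $L^2$. On the infinite-area disk, $L^2$ does not give $L^1$. Cauchy--Schwarz only yields $\int_{\partial D_r}d\,ds\le(2\pi\sinh r)^{1/2}\bigl(\int_{\partial D_r}d^2\,ds\bigr)^{1/2}$. Concretely, the radial function $d=e^{-\alpha r}$ with $\tfrac12<\alpha<1$ has bounded gradient and lies in $L^2(dA_{\mathbb D})$, yet $\int_{\partial D_r}d\,ds=2\pi\sinh r\,e^{-\alpha r}\to\infty$.

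So even granting $d\in L^2$, your boundary terms need not stay bounded along any sequence of radii. An $L^1$ bound on $d$ is a much finer statement about how close $\Phi$ is to $\mathrm{DE}(\varphi)$, and nothing in your sketch provides it. The inputs to your $L^2$ bound are also only asserted: that the tension of $\mathrm{DE}(\varphi)$ is in $L^2$, a coercive differential inequality for $d$, and the Poincar\'e step.

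You could exploit the cancellation between the energy and Jacobian boundary terms. That would turn the first-order term into something like $\int_{\partial D_r}|\overline\partial\Phi|\,d\,ds$. However, the second-order swept-area term would then require $C^1$-closeness of $\Phi$ and $\Psi$, which is again unproved.

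The paper sidesteps boundary terms entirely. It cuts off $\mu=\mu_{\mathrm{DE}(\varphi)}$ on large balls inside fundamental domains $F_n$ of cocompact groups $\Gamma_n$ whose injectivity radius tends to infinity. This yields $\Gamma_n$-equivariant $\mu_n$ with $\int_{F_n}|\mu_n|^2\le\int_{\mathbb D}|\mu|^2$ and uniform $C^2_h$ bounds; this is why it proves $\|\mu_{\mathrm{DE}(\varphi)}\|_{C^2_h}<\infty$.

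On the closed surfaces, Eells--Sampson together with $\int\operatorname{Jac}=\operatorname{Area}$ gives $\int_{F_n}|\overline\partial\Phi_n|^2\le\int_{F_n}|\overline\partial w_{\mu_n}|^2$, with no boundary contribution. Interior Schauder estimates give $|\partial w_{\mu_n}|\le C$, so the right side is at most $C^2\int_{\mathbb D}|\mu|^2$, which is finite by Cui. Finally, $C^2$ convergence $\Phi_n\to\Phi$ on compacta transfers the bound to $\Phi$.

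Thus the only quantitative inputs are $\mu\in L^2$ and pointwise bounds on $|\partial w_{\mu_n}|$, rather than any decay of $d(\Phi,\mathrm{DE}(\varphi))$.
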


For closed hyperbolic surfaces, the harmonic map in a given homotopy class minimizes also the anti-holomorphic $L^2$-energy. The next result shows that, under suitable assumptions, an analogous minimality statement holds in the universal setting.

\begin{principal}\label{main2}
Let $\varphi$ be a Weil--Petersson circle homeomorphism and let $\Phi$ be its harmonic quasiconformal extension. Then, for every $C^\infty$-quasiconformal extension $G:\mathbb D\to\mathbb D$ of $\varphi$ whose Beltrami differential is square-integrable and has finite $C_h^2$-norm, one has
\[
\int_{\mathbb D} |\overline{\partial}\Phi|^2\,dA_{\mathbb D}
\leq
\int_{\mathbb D} |\overline{\partial}G|^2\,dA_{\mathbb D}.
\]
\end{principal}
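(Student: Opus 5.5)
The plan is to mimic the closed-surface argument: the anti-holomorphic energy of $G$ exceeds that of $\Phi$ by an integral of $|\overline{\partial}G|^2-|\overline{\partial}\Phi|^2$, and in the compact case this difference equals the ordinary energy difference minus a topological (degree) term that is the same for both maps. Concretely, write $e(F)=|\partial F|^2+|\overline{\partial}F|^2$ and $J(F)=|\partial F|^2-|\overline{\partial}F|^2$ (all taken with respect to the hyperbolic metrics, so that $J(F)\,dA_{\mathbb D}=F^*dA_{\mathbb D}$). Then
\[
|\overline{\partial}F|^2=\tfrac12\bigl(e(F)-J(F)\bigr).
\]
Neither $e$ nor $J$ is integrable separately on $\mathbb D$, so I will work on an exhaustion by disks $D_r=\{|z|<r\}$ and study
\[
\int_{D_r}\bigl(|\overline{\partial}G|^2-|\overline{\partial}\Phi|^2\bigr)dA_{\mathbb D}
=\tfrac12\int_{D_r}\bigl(e(G)-e(\Phi)\bigr)dA_{\mathbb D}-\tfrac12\int_{D_r}\bigl(G^*dA_{\mathbb D}-\Phi^*dA_{\mathbb D}\bigr).
\]

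\textbf{Energy term.} Let $V$ be the vector field along $\Phi$ obtained by pulling back the geodesic homotopy from $\Phi$ to $G$, i.e.\ $V=\exp_\Phi^{-1}G$. Convexity of energy along geodesic homotopies in a negatively curved target, together with harmonicity of $\Phi$, gives
\[
\int_{D_r}\bigl(e(G)-e(\Phi)\bigr)\ge 2\int_{\partial D_r}\langle d\Phi(\nu),V\rangle\,ds+(\text{a nonnegative term}).
\]
The first variation of energy produces only this boundary flux term.

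\textbf{Area term.} The difference $G^*dA_{\mathbb D}-\Phi^*dA_{\mathbb D}$ is exact: it is $d$ of a transgression $1$-form built from the homotopy. By Stokes, its integral over $D_r$ is also a boundary integral over $\partial D_r$, bounded by roughly $\int_{\partial D_r}|V|\,(|d\Phi|+|dG|)\,ds$ with the densities measured hyperbolically. So in the end everything reduces to showing that boundary terms of the form $\int_{\partial D_r}\operatorname{dist}(\Phi,G)\,(|d\Phi|+|dG|)\,ds$ tend to $0$ along some sequence $r_n\to1$. Once that holds, letting $r_n\to1$ gives the inequality, since both anti-holomorphic energies are finite: for $\Phi$ by Theorem~\ref{mainprop}, and for $G$ because $|\overline{\partial}G|\asymp|\mu_G|$ for a quasiconformal map with bounded hyperbolic derivative.

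\textbf{Main obstacle: the boundary decay.} Both $|d\Phi|$ and $|dG|$ are bounded in hyperbolic norm. This follows from quasiconformality and harmonicity (Yau/Cheng-type estimates), and for $G$ from the finite $C^2_h$-norm. The hyperbolic length of $\partial D_r$, however, grows like $(1-r)^{-1}$, so I need $\operatorname{dist}_{\mathbb D}(\Phi(z),G(z))$ to be square-integrable against $dA_{\mathbb D}$. Then, by the coarea formula, $\int_{\partial D_r}\operatorname{dist}\,ds$ is small on average in $\log\frac1{1-r}$, and Cauchy--Schwarz extracts a good sequence $r_n$. To get this $L^2$ bound, I would compare both maps to a common reference extension such as Douady--Earle, or first compose with $\Phi^{-1}$ to reduce to a quasiconformal self-map $H=G\circ\Phi^{-1}$ extending the identity with $\mu_H\in L^2$ and bounded $C^2_h$ data. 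The aim is then to show that $\operatorname{dist}(H(w),w)$ is controlled by an averaged $|\mu_H|$ on a hyperbolic ball around $w$, via a Cauchy-transform/Beurling estimate for maps fixing the boundary pointwise. This localized distance estimate is where the $C^2_h$ hypothesis should be used to pass from integral to pointwise control, and I expect it to be the hardest step.
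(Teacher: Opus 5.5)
\textbf{There is a genuine gap, and it sits exactly at the boundary terms.}

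Since $|d\Phi|\ge|\partial\Phi|\ge 1$, your reduction needs $\int_{\partial D_r}\operatorname{dist}(\Phi,G)\,ds\to 0$ along some $r_n\to 1$.

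Square-integrability of the distance does not give this. By coarea, $\int_{\mathbb D}f^2\,dA_{\mathbb D}=\int_0^1\bigl(\int_{\partial D_r}f^2\,ds\bigr)\frac{2\,dr}{1-r^2}$. Cauchy--Schwarz then costs the length $\frac{4\pi r}{1-r^2}$ of $\partial D_r$, leaving only $\int_{1/2}^1\bigl(\int_{\partial D_r}f\,ds\bigr)^2\,dr<\infty$, which allows $\int_{\partial D_r}f\,ds\to\infty$.

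Worse, no estimate on the distance can rescue the scheme. Take $\varphi=\mathrm{id}$, so $\Phi=\mathrm{id}$. In geodesic polar coordinates set $G(s,\theta)=(s+\epsilon h(s),\theta)$, with $h$ smooth, vanishing near $0$, and $h(s)=e^{-s/2}/s$ for large $s$.
\begin{itemize}
\item $G$ is a smooth quasiconformal extension of the identity with finite $C^2_h$-norm and $|\mu_G|\le C\epsilon(h+|h'|)\in L^2(dA_{\mathbb D})$, so it satisfies every hypothesis of the statement.
\item Yet $\operatorname{dist}(\Phi,G)=\epsilon h$, and $\int_{\partial D_r}\operatorname{dist}\,ds=2\pi\epsilon h(s_r)\sinh s_r\to\infty$, where $s_r$ is the hyperbolic radius of $D_r$.
\item Indeed $\int_{D_r}(J(G)-J(\Phi))\,dA_{\mathbb D}=2\pi\bigl(\cosh(s_r+\epsilon h(s_r))-\cosh s_r\bigr)\sim\pi\epsilon e^{s_r/2}/s_r\to\infty$.
\end{itemize}
So the area difference and the energy difference each diverge individually; only their difference, $2\int_{D_r}|\overline{\partial}G|^2\,dA_{\mathbb D}$, stays bounded. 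Any argument that bounds the energy flux $\int_{\partial D_r}\langle d\Phi(\nu),V\rangle\,ds$ and the transgression term $\int_{\partial D_r}\alpha$ separately in absolute value must fail.

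\textbf{The missing idea is the first-order cancellation between these two terms.} At a harmonic $\Phi$, the first variation of $\int_{D_r}|\overline{\partial}F|^2\,dA_{\mathbb D}=\int_{D_r}\rho^2(F)|F_{\bar z}|^2\,dx\,dy$ in the direction $V$ is exactly $\mathrm{Re}\,\frac{1}{i}\int_{\partial D_r}\rho^2(\Phi)\,\overline{\Phi_{\bar z}}\,V\,dz$. This is bounded by $\int_{\partial D_r}|\overline{\partial}\Phi|\,\operatorname{dist}(\Phi,G)\,ds$: the factor is $|\overline{\partial}\Phi|$, not $|d\Phi|$.

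A direct proof along your lines would still need three further things, none of which you address:
\begin{itemize}
\item $|\overline{\partial}\Phi|\in L^2$. Citing Theorem~\ref{mainprop} for this is circular, because the paper deduces that theorem from the present statement; you would need a bootstrap.
\item The $L^2$ bound on $\operatorname{dist}(\Phi,G)$ that you left open. It cannot come from a fixed-radius local average of $|\mu_H|$: a tiny hyperbolic translation on a huge ball around $w$, tapered to the identity, has $\mu_H=0$ near $w$ but $H(w)\neq w$.
\item Control of the second-order remainder of the area term, a boundary integral of size $\int_{\partial D_r}|V|\,|\nabla V|\,ds$, which requires something like $|\nabla V|\in L^2$.
\end{itemize}

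\textbf{How the paper avoids all of this.} It never integrates over $D_r$. It cuts off $\mu_G$ and extends it equivariantly under cocompact groups $\Gamma_n$ with $\mathrm{Inj}(\Gamma_n)\to\infty$. It then applies Lemma~\ref{lemma:antihol_is_minimum} on closed surfaces, where the area term is topological and there are no boundary terms. Finally it passes to the limit using the convergence $\Phi_n\to\Phi$ and the domination $|\overline{\partial}w_{\mu_n}|\le C|\mu_G|$, which comes from the uniform $C^2_h$ bound.
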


By a Beltrami differential of finite $C_h^2$-norm, we mean a Beltrami differential whose hyperbolic derivatives up to order two are bounded. For clarity, we omit the exact definition here and refer the reader to Section~\ref{sec3}. The Douady--Earle extension provides one example of a quasiconformal map whose Beltrami differential has finite $C_h^2$-norm; this is proved in Proposition~\ref{prop: beltrami_DE}. There are also many other examples, including quasiconformal harmonic diffeomorphisms and minimal Lagrangian diffeomorphisms; see Remark \ref{remark on C2 bounded}.

\subsection{Outline of the proof}

We now briefly explain the proof of our main results. First, let us explain how Theorem~\ref{mainprop} implies Theorem~\ref{mainthm}. The key observation is that the square-integrability of the Beltrami differential of a harmonic map is equivalent to the finiteness of its anti-holomorphic $L^2$-energy. Indeed, for a quasiconformal harmonic map $\Phi:\mathbb D\to\mathbb D$, it follows from results of Wan \cite{Wan} and the second author \cite{markovic-schoenconjecture} that
\[
C\,|\overline{\partial}\Phi|
\leq
|\mu_\Phi|
\leq
|\overline{\partial}\Phi|,
\]
where $C>0$ depends only on the quasiconformal dilatation of $\Phi$. Therefore, Theorem~\ref{mainthm} reduces to proving that the anti-holomorphic $L^2$-energy of the harmonic quasiconformal extension of a Weil--Petersson circle homeomorphism is finite.

The next observation concerns harmonic maps between closed hyperbolic surfaces. Let $\Sigma$ be a closed hyperbolic surface and let $f:\Sigma\to\Sigma$ be a map. Then the unique harmonic map $h:\Sigma\to\Sigma$ homotopic to $f$ minimizes the anti-holomorphic energy among all maps in the homotopy class of $f$. This follows from a result of Eells--Sampson \cite{ells-sampson} together with Lemma~\ref{lemma:antihol_is_minimum}.

The proof of Theorem~\ref{main2} proceeds by approximating $G$ with a sequence of maps $G_n:\mathbb D\to\mathbb D$ that are equivariant under the action of cocompact Fuchsian groups, and then applying the previous observation on the minimization of anti-holomorphic energy for maps between closed hyperbolic surfaces. This approach requires some PDE results concerning the Beltrami equation, which will be explained in Section~\ref{section-elleptic-regularity}.

The only remaining point in the proof of Theorem~\ref{mainprop} is to construct a quasiconformal extension of a Weil--Petersson circle homeomorphism satisfying the hypotheses of Theorem~\ref{main2}, namely, that its Beltrami differential is square-integrable and has finite $C_h^2$-norm. For this, we take the Douady--Earle extension: its Beltrami differential is square-integrable by a result of \cite{cui2000integrably}, and the finiteness of its $C_h^2$-norm is proved in Proposition~\ref{prop: beltrami_DE}.

\subsection{Acknowledgements}

We would like to thank Andrea Seppi for helpful comments on the first draft of this paper.

\section{Preliminaries}

\subsection{Universal Teichm\"uller theory}

We begin by recalling some basic facts on quasiconformal maps. We refer the reader to \cite{hubbard,Markovic-fletcher-book} for a more detailed exposition.

\begin{defi}\label{defi:quasiconform}
Let $f\colon \Omega\to f(\Omega)\subset \mathbb{C}$ be an orientation-preserving homeomorphism defined on an open subset $\Omega\subset\mathbb{C}$. The map $f$ is said to be \emph{quasiconformal} if its weak partial derivatives belong to $L^1_{\mathrm{loc}}(\Omega)$ and if there exists $\mu\in L^\infty(\Omega)$ with $\|\mu\|_\infty<1$ such that
\begin{equation}\label{eq:beltrami}
f_{\overline z}=\mu\,f_z.
\end{equation}
\end{defi}

A quasiconformal map as in Definition~\ref{defi:quasiconform} is called
\emph{$K$-quasiconformal}, where
\[
K=\frac{1+\|\mu\|_\infty}{1-\|\mu\|_\infty}.
\]

Equation~\eqref{eq:beltrami} is called the \emph{Beltrami equation} with coefficient $\mu$. We will provide additional results concerning this equation later in Section \ref{sec3}.

Since we will mainly be interested in boundary values of quasiconformal self-maps of the unit disk, we recall the notion of quasisymmetry on the unit circle. Given a quadruple $Q=(z_1,z_2,z_3,z_4)$ of distinct points on $\mathbb{S}^1$, we define its cross-ratio by
\[
\operatorname{cr}(z_1,z_2,z_3,z_4)
=
\frac{(z_4-z_1)(z_3-z_2)}
{(z_2-z_1)(z_3-z_4)}.
\]
We call a quadruple $Q=(z_1,z_2,z_3,z_4)$ \emph{symmetric} if
\[
\operatorname{cr}(Q)=-1.
\]

\begin{defi}
An orientation-preserving homeomorphism
$\varphi:\mathbb{S}^1\to\mathbb{S}^1$
is called \emph{$M$-quasisymmetric}, for some $M\geq 1$, if
\[
-M\leq \operatorname{cr}(\varphi(Q))
\leq -\frac{1}{M}
\]
for every symmetric quadruple $Q$. We say that $\varphi$ is \emph{quasisymmetric} if it is $M$-quasisymmetric for some $M\geq 1$.
\end{defi}

It is a classical result that every quasiconformal map $\Phi:\mathbb{D}\to\mathbb{D}$ extends continuously to a quasisymmetric homeomorphism $\varphi:\mathbb{S}^1\to\mathbb{S}^1$, and conversely every quasisymmetric homeomorphism of $\mathbb{S}^1$ admits a quasiconformal extension to $\mathbb{D}$ (see, for instance, the Douady--Earle extension \cite{DouadyEarle1986}).

We denote by $\mathrm{QS}(\mathbb{S}^1)$ the group of orientation-preserving quasisymmetric homeomorphisms of $\mathbb{S}^1$. We also define
\[
\mathrm{Isom}(\mathbb{D})
=
\left\{
A=
\begin{pmatrix}
\alpha & \beta\\
\overline{\beta} & \overline{\alpha}
\end{pmatrix}
:
\alpha,\beta\in\mathbb{C},
\ |\alpha|^2-|\beta|^2=1
\right\}
\big/_{A\sim -A},
\]
the group of orientation-preserving isometries of the hyperbolic disk, which acts on $\mathbb{D}$ and $\mathbb{S}^1$ by M\"obius transformations. The \emph{universal Teichm\"uller space} is defined as
\[
T(1)
:=
\mathrm{Isom}(\mathbb{D})\backslash
\mathrm{QS}(\mathbb{S}^1)
\simeq
\left\{
\varphi\in\mathrm{QS}(\mathbb{S}^1)
\;\middle|\;
\varphi(-1)=-1,\,
\varphi(-i)=-i,\,
\varphi(1)=1
\right\}.
\]
The space $T(1)$ is an infinite-dimensional complex manifold containing all classical Teichm\"uller spaces as complex submanifolds, which motivates the terminology \emph{universal Teichm\"uller space}.

We now briefly recall an equivalent description of the universal Teichm\"uller space. Define the space of Beltrami differentials as the unite ball in $L^{\infty}(\mathbb{D})$, that is
\[
\mathrm{Bel}(\mathbb{D})
:=
\left\{
\mu\in L^\infty(\mathbb{D})
\;\middle|\;
\|\mu\|_\infty<1
\right\}.
\]
Given $\mu\in\mathrm{Bel}(\mathbb{D})$, we extend it to
$\hat{\mathbb C}=\mathbb C\cup\{\infty\}$ by reflection across the unit circle:
\[
\hat{\mu}\!\left(\frac{1}{\overline z}\right)
=
\mu(z)\,
\frac{z^2}{\overline z^2},
\qquad z\in\mathbb D.
\]
By the measurable Riemann mapping theorem, there exists a unique quasiconformal homeomorphism
\[
w_\mu:\hat{\mathbb C}\to\hat{\mathbb C}
\]
satisfying the Beltrami equation
\[
(w_\mu)_{\overline z}
=
\hat{\mu}(w_\mu)_z
\]
and fixing the points $-1$, $-i$, and $1$. The map $w_\mu$ preserves $\mathbb S^1$, and its restriction satisfies
\[
w_\mu|_{\mathbb S^1}\in\mathrm{QS}(\mathbb S^1).
\]
This yields an identification
\[
T(1)\simeq \mathrm{Bel}(\mathbb D)/\sim,
\]
where
\[
\mu\sim\nu
\qquad\Longleftrightarrow\qquad
w_\mu|_{\mathbb S^1}
=
w_\nu|_{\mathbb S^1}.
\]

\begin{remark}\label{remark_on_dependence}
The recent work of El Emam and Sagman \cite{ElEmamSagman2024} studies the dependence of solutions of the Beltrami equation \eqref{eq:beltrami} on the coefficient $\mu$. More precisely, they consider the \emph{principal solution} $f^\mu:\mathbb D\to\mathbb D$, namely the unique quasiconformal map satisfying the Beltrami equation with coefficient $\mu$ and the normalization conditions
\[
f^\mu(0)=0, \qquad f^\mu(1)=1.
\]
In particular, it is shown in \cite[Theorem~A and Remark~2.10]{ElEmamSagman2024} that if $\mu$ is smooth, then $f^\mu$ is smooth. Moreover, if $(\mu_n)$ is a sequence of smooth Beltrami differentials converging to $\mu$ in the $C^\infty$ topology on compact subsets of $\mathbb D$, then $f^{\mu_n}$ converges to $f^\mu$ in the $C^\infty$ topology on compact subsets of $\mathbb D$, and the boundary homeomorphisms $f^{\mu_n}|_{\mathbb S^1}$ converge to $f^\mu|_{\mathbb S^1}$.

Since $f^\mu$ and $w_\mu$ have the same Beltrami differential, there exists an isometry $A^\mu$ of the hyperbolic disk such that $w_\mu = A^\mu \circ f^\mu .$ It follows that $w_\mu$ is smooth. Moreover, observe that $A^\mu$ depends only on the points
\[
f^\mu(-1), \qquad f^\mu(-i), \qquad f^\mu(1),
\]
which are pairwise distinct, since $f^\mu$ is a homeomorphism when restricted to the circle. This implies that if $(\mu_n)$ is a sequence of smooth Beltrami differentials converging to $\mu$ in the $C^\infty$ topology on compact subsets of $\mathbb D$, then $w_{\mu_n}=A^{\mu_n}\circ f^{\mu_n}$ converges to $w_\mu=A^\mu\circ f^\mu$ in the $C^\infty$ topology on compact subsets.
\end{remark}

\subsection{Harmonic maps}

We introduce some notation that will be used throughout the paper and recall several classical results concerning harmonic maps. We consider the density of the hyperbolic metric given by
\[
\rho(z)=\frac{2}{1-|z|^2},
\qquad z\in\mathbb D.
\]

\begin{defi}
A $C^2$ map $\Phi:\mathbb D\to\mathbb D$ is called \emph{harmonic} if and only if it satisfies
\[
\Phi_{z\bar z}
+
2\left(
\frac{\rho_z}{\rho}
\circ\Phi
\right)
\Phi_z\Phi_{\bar z}
=
0.
\]
\end{defi}
For any smooth map \(F:\mathbb D\to\mathbb D\), we define
\[
\partial F(z)
=
\frac{\rho(F(z))}{\rho(z)}\,F_z(z),
\qquad
\overline{\partial}F(z)
=
\frac{\rho(F(z))}{\rho(z)}\,F_{\bar z}(z).
\]
It is straightforward to verify that for every
\(A\in\mathrm{Isom}(\mathbb D)\),
\[
\partial(A\circ F)(z)
=
\frac{A'(F(z))}{|A'(F(z))|}\,
\partial F(z),
\qquad
\overline{\partial}(A\circ F)(z)
=
\frac{A'(F(z))}{|A'(F(z))|}\,
\overline{\partial}F(z).
\]
In particular,
\begin{equation}\label{eq:invariance}
|\partial(A\circ F)|
=
|\partial F|,
\qquad
|\overline{\partial}(A\circ F)|
=
|\overline{\partial}F|.
\end{equation}
When $\Phi:\mathbb D\to\mathbb D$ is harmonic, we refer to
$|\overline{\partial}\Phi|$
and
$|\partial\Phi|$
as the \emph{anti-holomorphic} and \emph{holomorphic energies} of $\Phi$, respectively. Next, we denote by
\[
\mathrm{Hopf}(\Phi)
:=
(\rho^2\circ\Phi)\,
\Phi_z\Phi_{\bar z}
\]
the \emph{Hopf differential} of $\Phi$. It is well known that if $\Phi$ is harmonic, then $\mathrm{Hopf}(\Phi)$ is holomorphic on $\mathbb D$. We recall the following fundamental estimate in the theory of harmonic maps.

\begin{theorem}\cite{Wan,markovic-schoenconjecture}\label{theorem:estimate}
Let $\Phi:\mathbb{D}\to \mathbb{D}$ be a quasiconformal harmonic map. Then there exists a constant $C>0$, depending only on the quasiconformal dilatation of $\Phi$, such that
$$1\leq \lvert \partial\Phi\rvert\leq C.$$
In particular, since
$|\mu_\Phi|=\frac{|\overline{\partial}\Phi|}{|\partial\Phi|},$ we have
\begin{equation}\label{eq:equivalence-bel-and-anti}
\frac{1}{C}\, |\overline{\partial}\Phi|
\leq
|\mu_\Phi|
\leq
|\overline{\partial}\Phi|,
\end{equation}
\end{theorem}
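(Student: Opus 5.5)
The plan is to prove the lower bound $1\le|\partial\Phi|$ from the Bochner formula and a maximum-principle argument. The upper bound $|\partial\Phi|\le C$ will come from quasiconformality together with compactness of normalized harmonic maps. The equivalence \eqref{eq:equivalence-bel-and-anti} is then immediate: from $|\mu_\Phi|=|\overline{\partial}\Phi|/|\partial\Phi|$ and $1\le|\partial\Phi|\le C$ we get $|\overline\partial\Phi|/C\le|\mu_\Phi|\le|\overline\partial\Phi|$.

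\textbf{The Bochner identity.} A quasiconformal harmonic map $\Phi:\mathbb D\to\mathbb D$ is a diffeomorphism, by Schoen--Yau/Heinz-type results. In particular $|\partial\Phi|>|\overline\partial\Phi|\ge0$, so $w:=\log|\partial\Phi|$ is smooth. With $\Delta$ the hyperbolic Laplacian (curvature $-1$ on both sides), the standard Bochner identity reads
\[
\Delta w = |\partial\Phi|^2-|\overline\partial\Phi|^2-1 .
\]
We also use $|\partial\Phi||\overline\partial\Phi| = |\mathrm{Hopf}(\Phi)|/\rho^2$.

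\textbf{Lower bound $|\partial\Phi|\ge1$.} Suppose $\inf w<0$. By \eqref{eq:invariance}, we may precompose with isometries and take subsequential limits, using uniform local $C^k$ bounds that follow from $K$-quasiconformality and harmonic-map interior estimates. This reduces us to the case where the infimum is attained at an interior point $z_0$ by a limit quasiconformal harmonic map. This is where the standard compactness of normalized $K$-qc harmonic maps, as in \cite{markovic-schoenconjecture}, is used. At $z_0$ we have $\Delta w\ge0$, hence $|\partial\Phi|^2\ge 1+|\overline\partial\Phi|^2\ge1$. This contradicts $w(z_0)<0$.

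\textbf{Upper bound $|\partial\Phi|\le C(K)$.} Here the Bochner maximum principle alone only controls the difference $|\partial\Phi|^2-|\overline\partial\Phi|^2$. We combine it with $|\overline\partial\Phi|\le k|\partial\Phi|$, where $k=(K-1)/(K+1)$. At an (almost) maximum point, again reached via the normalization/compactness argument, we have $\Delta w\le 0$. This gives
\[
(1-k^2)|\partial\Phi|^2\le |\partial\Phi|^2-|\overline\partial\Phi|^2\le 1,
\]
so $|\partial\Phi|^2\le 1/(1-k^2)$.

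\textbf{Main obstacle.} The delicate step is justifying the attainment of the extremum, since $\mathbb D$ is noncompact. I would handle it either with the Omori--Yau maximum principle on the complete hyperbolic disk, which requires knowing a priori that $w$ is bounded above, or with the compactness/normalization argument above. Boundedness above of $w$ a priori follows from the Cheng-type gradient estimate for harmonic maps into $\mathbb D$ on unit hyperbolic balls. This gradient bound requires the image of a unit ball to have bounded diameter, which holds because a $K$-qc map is a quasi-isometry at large scales (rough). Once $w$ is bounded, Omori--Yau gives sequences along which $\Delta w\to\le0$ (respectively $\ge0$), and the two inequalities above pass to the limit.
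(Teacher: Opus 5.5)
Your argument is correct. The paper does not prove this statement; it quotes it from \cite{Wan,markovic-schoenconjecture}. So your proposal is a self-contained reconstruction rather than a variant of an argument in the text.

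The Bochner identity holds as you wrote it, with $\Delta=\frac{4}{\rho^2}\partial_z\partial_{\bar z}$. Renormalising to $B_n\circ\Phi\circ A_n$ and extracting a limit in the compact family of $K$-quasiconformal harmonic self-maps of $\mathbb D$ fixing $0$ does settle both bounds.

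You should say explicitly that this compactness step is also what excludes $\inf\log|\partial\Phi|=-\infty$ and $\sup\log|\partial\Phi|=+\infty$. The limit is a harmonic homeomorphism onto $\mathbb D$, hence a diffeomorphism, so $0<|\partial\Phi_\infty(0)|<\infty$. This is exactly where $\Phi(\mathbb D)=\mathbb D$ enters, and it must enter somewhere. For instance, $z\mapsto z/2$ is harmonic and conformal with $|\partial\Phi|<1$ everywhere, and its renormalised limits at points tending to $\mathbb S^1$ are constant.

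Your Omori--Yau alternative, as written, only justifies $\sup w<\infty$. The infimum version needs $w$ bounded below, which you have not shown.

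A different route to the lower bound needs neither compactness nor an a priori bound:
\begin{itemize}
\item By the Bochner identity, the conformal metric $|\partial\Phi|^2\rho^2|dz|^2$ has curvature $-1+|\overline{\partial}\Phi|^2/|\partial\Phi|^2\ge -1$.
\item This metric is complete, because it dominates $\frac14\Phi^*(\rho^2|dw|^2)$ and $\Phi$ is a diffeomorphism onto $\mathbb D$.
\item Yau's Schwarz lemma, applied to the identity map into $(\mathbb D,\rho^2|dz|^2)$, then gives $|\partial\Phi|\ge 1$.
\end{itemize}
That argument works for any orientation-preserving harmonic diffeomorphism onto $\mathbb D$, with no quasiconformality needed.

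Your compactness route, in exchange, yields the uniform upper bound with the explicit constant $C^2=1/(1-k^2)$, where $k=\frac{K-1}{K+1}$. It costs heavier inputs: normal families of $K$-quasiconformal maps, interior estimates for harmonic maps, and univalence of harmonic homeomorphisms.
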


The relation between the Hopf differential and the Beltrami differential is given by
\begin{equation}\label{eq:hopf and beltrami}
|\mu_\Phi|
=
\frac{|\mathrm{Hopf}(\Phi)|}
{\rho^2\,|\partial\Phi|^2}.
\end{equation}
It is possible to characterize quasiconformality of harmonic maps in terms of the Hopf differential. We introduce the space
\[
\mathrm{BQD}(\mathbb{D})
=
\left\{
\Psi:\mathbb{D}\to\mathbb{C}
\text{ holomorphic}
\;\middle|\;
\sup_{z\in\mathbb D}\rho(z)^{-2}|\Psi(z)|
=
\|\Psi\|_{\infty}
<
\infty
\right\},
\]
consisting of holomorphic quadratic differentials with bounded hyperbolic norm. A result of Wan \cite{Wan} states that a harmonic map $\Phi:\mathbb D\to\mathbb D$ is quasiconformal if and only if its Hopf differential belongs to $\mathrm{BQD}(\mathbb D)$.

We also define
\[
A^2(\mathbb D)
=
\left\{
\Psi:\mathbb D\to\mathbb C
\text{ holomorphic}
\;\middle|\;
\int_{\mathbb D}
|\Psi|^2\rho^{-2}\,d^2z
<
\infty
\right\}.
\]

The following lemma gives a sufficient condition for quasiconformality.

\begin{lemma}\label{lemma:L2hopf}
Let $\Phi:\mathbb D\to\mathbb D$ be a harmonic map whose Hopf differential belongs to $A^2(\mathbb D)$. Then $\Phi$ is quasiconformal.
\end{lemma}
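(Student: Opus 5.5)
The plan is to show that an $L^2$ Hopf differential is automatically in $\mathrm{BQD}(\mathbb D)$. Once that is done, Wan's criterion recalled above (a harmonic map $\Phi:\mathbb D\to\mathbb D$ is quasiconformal if and only if $\mathrm{Hopf}(\Phi)\in\mathrm{BQD}(\mathbb D)$) gives the conclusion. So the lemma reduces to the inclusion $A^2(\mathbb D)\subset \mathrm{BQD}(\mathbb D)$ with a norm bound $\|\Psi\|_\infty\le C\,\|\Psi\|_{A^2}$, which is a standard mean value argument for holomorphic functions made conformally invariant.

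\textbf{Step 1 (invariance).} For $A\in\mathrm{Isom}(\mathbb D)$ and holomorphic $\Psi$, consider the pullback $A^*\Psi=(\Psi\circ A)(A')^2$. Since $\rho(A(z))|A'(z)|=\rho(z)$, both quantities $\rho^{-2}|\Psi|$ (pointwise) and $\int_{\mathbb D}|\Psi|^2\rho^{-2}\,d^2z$ are invariant under this pullback. Concretely,
\[
\rho(z)^{-2}|A^*\Psi(z)|=\rho(A(z))^{-2}|\Psi(A(z))|,
\]
and the integral is invariant by the change of variables $w=A(z)$, whose Jacobian is $|A'|^2$.

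\textbf{Step 2 (estimate at the origin).} Take $w\in\mathbb D$ and choose $A$ with $A(0)=w$. It suffices to bound $\rho(0)^{-2}|A^*\Psi(0)|$, i.e.\ to bound $|F(0)|$ for the holomorphic function $F=A^*\Psi$. Apply the mean value property on the disk $\{|z|<1/2\}$ and then Cauchy--Schwarz:
\[
|F(0)|\le \frac{1}{\pi/4}\int_{|z|<1/2}|F|\,d^2z\le \frac{2}{\sqrt{\pi}}\Big(\int_{|z|<1/2}|F|^2\,d^2z\Big)^{1/2}.
\]
On $\{|z|<1/2\}$ the density $\rho$ is bounded above and below by absolute constants. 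Hence the last integral is at most a constant times $\int_{\mathbb D}|F|^2\rho^{-2}\,d^2z$, which equals $\int_{\mathbb D}|\Psi|^2\rho^{-2}\,d^2z$ by Step 1.

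\textbf{Step 3 (conclusion).} Combining Steps 1 and 2 gives
\[
\sup_{w\in\mathbb D}\rho(w)^{-2}|\Psi(w)|\le C\,\|\Psi\|_{A^2}<\infty
\]
for $\Psi=\mathrm{Hopf}(\Phi)$. Thus $\mathrm{Hopf}(\Phi)\in\mathrm{BQD}(\mathbb D)$, and Wan's theorem shows that $\Phi$ is quasiconformal.

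The only point needing care is the transformation law for the Hopf differential. Under precomposition by an isometry it transforms as a quadratic differential, but in Step 2 we work purely with the holomorphic function $\Psi$, so no harmonic map theory is used there. I expect no real obstacle; the substance of the lemma is carried entirely by Wan's criterion.
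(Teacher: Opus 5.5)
Your proof is correct and follows the paper's route: reduce to the inclusion $A^2(\mathbb D)\subset\mathrm{BQD}(\mathbb D)$ and then apply Wan's criterion. The only difference is that the paper cites Takhtajan--Teo's Lemma~I.2.1 for the inclusion, whereas you prove it directly with the isometry-invariant mean-value and Cauchy--Schwarz estimate, which is a valid proof of that lemma. One caveat, which you share with the paper: Wan's criterion is a statement about harmonic \emph{diffeomorphisms} of $\mathbb D$, since a constant map or $z\mapsto z^2$ is harmonic with vanishing Hopf differential but not quasiconformal, so the lemma should be read with that hypothesis.
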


\begin{proof}
By \cite[Lemma~I.2.1]{TakhtajonTeo}, we have the inclusion
\[
A^2(\mathbb D)\subset \mathrm{BQD}(\mathbb D).
\]
The conclusion therefore follows from Wan's characterization.
\end{proof}

It would be interesting to investigate whether the square-integrability of the Beltrami differential of a harmonic map implies quasiconformality. Although we do not address this question here, we establish the following relation between the square-integrability of the Beltrami differential and that of the Hopf differential under the quasiconformality assumption.

\begin{prop}\label{beltrami-L2-Hopf_L2}
Let $\Phi:\mathbb D\to\mathbb D$ be a quasiconformal harmonic map. Then the following assertions are equivalent:
\begin{itemize}
\item $\mu_\Phi\in L^2(\mathbb D,dA_{\mathbb D})$;
\item $\mathrm{Hopf}(\Phi)\in A^2(\mathbb D)$.
\end{itemize}
\end{prop}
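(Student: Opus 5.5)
The argument is a pointwise comparison, followed by integration against the hyperbolic area form. The two inputs are identity \eqref{eq:hopf and beltrami} and the two-sided bound $1\le|\partial\Phi|\le C$ from Theorem~\ref{theorem:estimate}, which holds because $\Phi$ is quasiconformal and harmonic.

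Squaring \eqref{eq:hopf and beltrami} and multiplying by $dA_{\mathbb D}=\rho^2\,d^2z$ gives the pointwise identity
\[
|\mu_\Phi|^2\,dA_{\mathbb D}
=\frac{|\mathrm{Hopf}(\Phi)|^2}{\rho^4\,|\partial\Phi|^4}\,\rho^2\,d^2z
=\frac{1}{|\partial\Phi|^4}\,|\mathrm{Hopf}(\Phi)|^2\rho^{-2}\,d^2z .
\]
I would check the normalization of \eqref{eq:hopf and beltrami} directly. We have $\mathrm{Hopf}(\Phi)=\rho(\Phi)^2\Phi_z\Phi_{\bar z}=\rho^2\,\partial\Phi\,\overline{\partial}\Phi$ and $|\mu_\Phi|=|\overline{\partial}\Phi|/|\partial\Phi|$. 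Hence $|\mathrm{Hopf}(\Phi)|/(\rho^2|\partial\Phi|^2)=|\overline\partial\Phi|/|\partial\Phi|=|\mu_\Phi|$, as claimed.

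Theorem~\ref{theorem:estimate} then gives $C^{-4}\le|\partial\Phi|^{-4}\le 1$ everywhere on $\mathbb D$. Integrating the identity over $\mathbb D$ yields
\[
C^{-4}\int_{\mathbb D}|\mathrm{Hopf}(\Phi)|^2\rho^{-2}\,d^2z
\;\le\;\int_{\mathbb D}|\mu_\Phi|^2\,dA_{\mathbb D}
\;\le\;\int_{\mathbb D}|\mathrm{Hopf}(\Phi)|^2\rho^{-2}\,d^2z .
\]
Since $\Phi$ is harmonic, $\mathrm{Hopf}(\Phi)$ is holomorphic. Its membership in $A^2(\mathbb D)$ is therefore exactly the finiteness of the right-hand integral, and the equivalence follows.

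There is essentially no obstacle in this argument. The only points needing care are the constant $\rho^2$ versus $\rho^{-2}$ bookkeeping above, and the fact that the lower bound $|\partial\Phi|\ge1$ (needed for one direction) and the upper bound $|\partial\Phi|\le C$ (needed for the other) both rely on the quasiconformality hypothesis through Theorem~\ref{theorem:estimate}.
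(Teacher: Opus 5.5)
Your proposal is correct and follows the paper's own argument: the paper's one-line proof combines the identity \eqref{eq:hopf and beltrami} with the bounds $1\le|\partial\Phi|\le C$ from Theorem~\ref{theorem:estimate}. You have simply written out the resulting pointwise identity $|\mu_\Phi|^2\,dA_{\mathbb D}=|\partial\Phi|^{-4}\,|\mathrm{Hopf}(\Phi)|^2\rho^{-2}\,d^2z$ and integrated it, and your normalization check and your assignment of the lower and upper bounds to the two directions are both accurate.
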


\begin{proof}
For quasiconformal harmonic maps, Theorem~\ref{theorem:estimate} together with the identity \eqref{eq:hopf and beltrami} implies the equivalence.
\end{proof}

\color{black}

In the proof of our main results, we will also consider harmonic maps between closed hyperbolic surfaces. Let $\Gamma_1,\Gamma_2\leq\mathrm{Isom}(\mathbb D)$ be discrete groups acting freely and cocompactly on $\mathbb D$ so that \begin{equation}\label{eq:closed_hyperbolic_surface}
    S_1=\mathbb D/\Gamma_1,
\qquad
S_2=\mathbb D/\Gamma_2 
\end{equation} are closed hyperbolic surfaces. A map $\Phi:S_1\to S_2$
is said to be harmonic if it lifts to a harmonic map on $\mathbb D$, which we continue to denote by $\Phi$. By the equivariance of the lift and the invariance property \eqref{eq:invariance}, the quantities $|\partial\Phi|$, $ |\overline{\partial}\Phi|$ descend to well-defined functions on $S_1$. The \emph{energy} of $\Phi:S_1\to S_2$ is defined by
\begin{equation}\label{eq:energy_formula}
    E(\Phi)
=
\int_{S_1}
\left(
|\partial\Phi|^2
+
|\overline{\partial}\Phi|^2
\right)
\,dA_{S_1},
\end{equation}
where $dA_{S_1}$ denotes the hyperbolic area form on $S_1$. Note that if $F\subset\mathbb D$ is a fundamental domain for the action of $\Gamma_1$, then
\[
E(\Phi)
=
\int_F
\left(
|\partial\Phi|^2
+
|\overline{\partial}\Phi|^2
\right)
\, dA_{\mathbb{D}}.
\]
Finally, one can show that $\Phi:S_1\to S_2$ is harmonic if and only if it is a critical point of the energy functional (see \cite{ells-sampson,wolf-harmonic} for a more detailed exposition). We conclude this preliminary section on harmonic maps with the following key lemma, which should be well known to experts.

\begin{lemma}\label{lemma:antihol_is_minimum}
Let $S_1,S_2$ be closed hyperbolic surfaces as in \eqref{eq:closed_hyperbolic_surface} and let $\Phi:S_1\to S_2$ be a harmonic diffeomorphism. Let $F:S_1\to S_2$ be an orientation-preserving diffeomorphism homotopic to $\Phi$. Then
\[
\int_{S_1} |\overline{\partial}\Phi|^2\,dA_{S_1}
\leq
\int_{S_1} |\overline{\partial}F|^2\,dA_{S_1},
\]
where $dA_{S_1}$ denotes the hyperbolic area form on $S_1$.
\end{lemma}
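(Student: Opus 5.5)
The plan is to write the anti-holomorphic energy as a combination of the total energy and a topological quantity. For any orientation-preserving diffeomorphism $F:S_1\to S_2$, the Jacobian of $F$ with respect to the hyperbolic metrics is
\[
J(F)=|\partial F|^2-|\overline{\partial}F|^2 .
\]
This holds pointwise on the lift, because $\rho(F)^2(|F_z|^2-|F_{\bar z}|^2)\,d^2z$ is the pullback of the hyperbolic area form, and dividing by $\rho^2$ gives the density with respect to $dA_{\mathbb D}$. Integrating over a fundamental domain $F_1$ of $\Gamma_1$ and using that $F$ is a degree-one diffeomorphism, we get
\[
\int_{S_1}\bigl(|\partial F|^2-|\overline{\partial}F|^2\bigr)\,dA_{S_1}=\operatorname{Area}(S_2)=-2\pi\chi(S_2).
\]
More generally this equals $\deg(F)\operatorname{Area}(S_2)$, and it depends only on the homotopy class. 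Combining with \eqref{eq:energy_formula} gives
\[
\int_{S_1}|\overline{\partial}F|^2\,dA_{S_1}=\tfrac12\bigl(E(F)-\operatorname{Area}(S_2)\bigr).
\]

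The same identity holds for $\Phi$, which is a harmonic diffeomorphism homotopic to $F$, so it has the same degree and the same area term. The lemma therefore reduces to the inequality $E(\Phi)\le E(F)$. This is the energy-minimizing property of harmonic maps into closed negatively curved targets. By Eells--Sampson, the homotopy class contains an energy minimizer, which is harmonic. By Hartman's uniqueness theorem (target curvature $<0$, map not homotopic to a constant or a geodesic, since it has degree one), the harmonic map in the class is unique, so it is $\Phi$. Alternatively, one can use that energy is convex along geodesic homotopies in nonpositive curvature: a critical point of a convex functional is a global minimum on the homotopy class.

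The only step needing care is justifying that minimality without extra regularity assumptions on $F$. Since $F$ is a smooth diffeomorphism of closed surfaces, $E(F)<\infty$ and the geodesic homotopy $F_t$ from $\Phi$ to $F$ is smooth. The function $t\mapsto E(F_t)$ is convex with derivative $0$ at $t=0$, because $\Phi$ is a critical point. Hence $E(\Phi)\le E(F)$, and the claimed inequality follows.
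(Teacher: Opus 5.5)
Your proposal is correct and follows the paper's proof. Both arguments use the Jacobian identity to write $\int_{S_1}|\overline{\partial}G|^2\,dA_{S_1}=\tfrac12\bigl(E(G)-\operatorname{Area}(S_2)\bigr)$ for $G=\Phi,F$, which reduces the lemma to $E(\Phi)\le E(F)$; the paper just cites Eells--Sampson for that inequality, while you justify it via Hartman's uniqueness or convexity of the energy along the geodesic homotopy, which fills in that step rather than changing the route.
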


\begin{proof}
It follows from a result of Eells--Sampson \cite{ells-sampson} that the harmonic map $\Phi$ minimizes the energy functional in its homotopy class, and so
\[
E(\Phi)\leq E(F).
\]
Next, observe that for any orientation-preserving diffeomorphism $G:S_1\to S_2$, the Jacobian of $G$ is given by
\[
\operatorname{Jac}(G)
=
|\partial G|^2-|\overline{\partial}G|^2.
\]
In particular,
\[
\int_{S_1}\operatorname{Jac}(G)\,dA_{S_1}
=
\operatorname{Area}(S_2).
\]
Applying this identity to both $\Phi$ and $F$, we obtain
\[
\int_{S_1}\operatorname{Jac}(\Phi)\,dA_{S_1}
=
\int_{S_1}\operatorname{Jac}(F)\,dA_{S_1}
=
\operatorname{Area}(S_2).
\]
Using this equality together with formula~\eqref{eq:energy_formula}, we obtain
\[
2\int_{S_1}
|\overline{\partial}\Phi|^2\,dA_{S_1}
=
E(\Phi)-\operatorname{Area}(S_2)
\leq
E(F)-\operatorname{Area}(S_2)
=
2\int_{S_1}
|\overline{\partial}F|^2\,dA_{S_1}.
\]
This completes the proof.
\end{proof}

\begin{remark}\label{remark:on_energy}
The energy of a quasiconformal harmonic map $\Phi:\mathbb{D}\to \mathbb{D}$, in the sense of
$\int_{\mathbb{D}}\left(|\partial \Phi|^2+|\overline{\partial}\Phi|^2\right)\, dA_{\mathbb{D}},$
is infinite. Indeed, by Theorem \ref{theorem:estimate}, we have $|\partial \Phi|\geq 1$, and therefore
\[
|\partial \Phi|^2 + |\overline{\partial}\Phi|^2 \geq 1.
\] and so the integral should be infinite.
\end{remark}

\section{Anti-holomorphic energy}\label{sec3}

In this section, we study in more detail the relation between the holomorphic and anti-holomorphic energies of a harmonic map and its Beltrami coefficient.

\subsection{Elliptic estimate of the Beltrami equation}\label{section-elleptic-regularity}
Let $g_{\mathbb D}$ denote the hyperbolic metric on $\mathbb D$. Viewing Beltrami differentials as $(-1,1)$-tensors, the metric $g_{\mathbb D}$ and its Levi--Civita connection $\nabla$ naturally extend to the bundle of $C^2$ Beltrami differentials. We then define
\begin{equation}\label{eq:norme}
\|\mu\|_{C^2_h(\mathbb D)}
=
\sup_{z\in\mathbb D} |\mu(z)|
+
\sup_{z\in\mathbb D} \lVert\nabla\mu(z)\rVert_{g_{\mathbb D}}
+
\sup_{z\in\mathbb D} \lVert\nabla^2\mu(z)\rVert_{g_{\mathbb D}}.
\end{equation}
The subscript \(h\) in \(C^2_h\) refers to the hyperbolic metric.

The group $\mathrm{Isom}(\mathbb D)$ of orientation-preserving isometries of $\mathbb D$ acts on the space of Beltrami differentials by pullback:
$$
(A^*\mu)(a)
:=
\mu(A(a))
\frac{\overline{A'(a)}}{A'(a)},
$$
for $A\in\mathrm{Isom}(\mathbb D)$ and $a\in\mathbb D$. With respect to this action, each term appearing in the norm \eqref{eq:norme} is invariant. In particular,
\[
\|A^*\mu\|_{C^2_h}
=
\|\mu\|_{C^2_h}.
\]
For every $r>0$, we denote by $\mathbb{D}_r=\{z\in\mathbb{D} : |z|<r\}$. On $\mathbb{D}_r$, the hyperbolic and Euclidean metrics are bilipschitz equivalent, with constants depending only on $r$. The next result is a Schauder-type estimate for the Beltrami equation. It can also be deduced from the more general elliptic estimates recently established by El Emam and Sagman in \cite[Theorem~2.3]{ElEmamSagman2024}. For the convenience of the reader, we provide an alternative proof in our setting.
\begin{prop}\label{prop:Schauder}
Let $\mu\in \mathrm{Bel(\mathbb{D})}$ be a $C^2$ Beltrami differential and let \(F:\mathbb{D}\to \mathbb{D}\) be a $C^2$ solution of the Beltrami equation
\[
F_{\bar z}-\mu F_z=0,
\qquad \|\mu\|_{\infty}<k<1,
\qquad \|\mu\|_{C_h^2}<K <\infty.
\]
Let $0<r<1$, then there exists a constant $C(r,k,K)$ depending only on $r,k$ and $K$ so that 
$$\lVert F\rVert_{C^{1}(\mathbb{D}_{\frac{r}{2}})}<C(k,r,K)$$
\end{prop}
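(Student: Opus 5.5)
Since $F$ maps into $\mathbb D$, we have $\|F\|_{L^\infty}\le 1$ for free, so only $F_z$ and $F_{\bar z}=\mu F_z$ need to be bounded on $\mathbb D_{r/2}$. I will use standard interior estimates for the Beltrami equation on the Euclidean disk $\mathbb D_r$. On $\mathbb D_r$ the hyperbolic and Euclidean metrics are comparable with constants depending only on $r$, and the Christoffel symbols of $g_{\mathbb D}$ are bounded there. Hence $\|\mu\|_{C^2_h}<K$ yields a Euclidean bound $\|\mu\|_{C^2(\mathbb D_r)}\le C_1(r,K)$; in particular $\|\mu\|_{C^{1,\alpha}(\mathbb D_r)}\le C_1$ for any fixed $\alpha\in(0,1)$. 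Together with $\|\mu\|_\infty<k$, this is the only information about $\mu$ I will use.

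\textbf{Step 1 ($W^{1,p}$ bound).} Choose $p>2$ with $k\,\|S\|_{L^p\to L^p}<1$, where $S$ is the Beurling transform; this is possible since $\|S\|_{L^p}\to1$ as $p\to2$, so $p$ depends only on $k$. Fix a cutoff $\eta\in C_c^\infty(\mathbb D_r)$ with $\eta\equiv1$ on $\mathbb D_{3r/4}$, and put $u=\eta F$. Then
\[
u_{\bar z}-\mu u_z=(\eta_{\bar z}-\mu\eta_z)F=:g,
\]
and $|g|\le C(r)$ because $|F|\le1$. Since $u$ is compactly supported, $u_z=S(u_{\bar z})$, so $u_{\bar z}=g+\mu S(u_{\bar z})$. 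Inverting $I-\mu S$ by a Neumann series on $L^p$ gives
\[
\|u_{\bar z}\|_{L^p}+\|u_z\|_{L^p}\le C(k,r).
\]
To apply this, $u_{\bar z}$ must a priori lie in $L^p$. This holds because $F\in C^2$, so $u\in C^1_c$; no quantitative information about $F$ enters here. It follows that $\|F\|_{W^{1,p}(\mathbb D_{3r/4})}\le C(k,r)$, and by Morrey's inequality $F$ is uniformly $C^{0,1-2/p}$ on that disk.

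\textbf{Step 2 (bootstrap to $C^{1,\alpha}$).} Differentiate the equation: $w=F_z$ satisfies
\[
w_{\bar z}-\mu w_z=\mu_z\,w,
\]
since $(F_{\bar z})_z=(\mu F_z)_z$. This is again a Beltrami-type equation, now with a zeroth-order term whose coefficient $\mu_z$ is bounded by $C_1$. Repeat the cutoff argument with $\eta'$ supported in $\mathbb D_{3r/4}$ and equal to $1$ on $\mathbb D_{5r/8}$. The right-hand side $(\eta'_{\bar z}-\mu\eta'_z)w+\eta'\mu_z w$ is bounded in $L^p$ by Step 1. This is the one place where I need $F\in C^2$ to justify the a priori integrability of $(\eta' w)_{\bar z}$. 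We obtain $w\in W^{1,p}(\mathbb D_{5r/8})$ with norm at most $C(k,r,K)$, and Morrey then bounds $\sup_{\mathbb D_{r/2}}|F_z|$. Finally $|F_{\bar z}|\le k|F_z|$, which completes the proof.

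The main obstacle is Step 2, specifically checking that the constants stay independent of $F$. The key point is that the $W^{1,p}$ bound on $F$ in Step 1 needs only $|F|\le1$, which is automatic, rather than any normalization of $F$. If the differentiated equation proves awkward, there is a fallback: cite the interior Schauder estimate for the Beltrami equation with $C^{\alpha}$ coefficients, $\|F\|_{C^{1,\alpha}(\mathbb D_{r/2})}\le C(k,\|\mu\|_{C^\alpha(\mathbb D_r)},r)\,\|F\|_{L^\infty(\mathbb D_r)}$ (e.g.\ from Bers--Nirenberg or Astala--Iwaniec--Martin), and combine it with $\|F\|_\infty\le1$.
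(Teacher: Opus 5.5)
Your proof is correct, but it takes a different route from the paper.

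\textbf{The paper's route.} It converts the Beltrami equation into real scalar equations. By Astala--Iwaniec--Martin, $\operatorname{Re}F$ and $\operatorname{Im}F$ solve $\operatorname{div}(A_\mu\,\mathrm{grad}\,u)=0$. The matrix $A_\mu$ has eigenvalues $\frac{1+|\mu|}{1-|\mu|}$ and $\frac{1-|\mu|}{1+|\mu|}$, so it is uniformly elliptic with constants depending only on $k$. Its Euclidean regularity on $\mathbb D_r$ is controlled by $(r,k,K)$. The interior Schauder estimate for divergence-form equations (Gilbarg--Trudinger, Theorem~8.32) then gives $|u|_{C^1(\mathbb D_{r/2})}\le C(k,r,K)\,|u|_{C^0(\mathbb D_r)}\le C(k,r,K)$ for both real and imaginary parts.

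\textbf{Your route.} You stay within the complex $L^p$ theory of the Beltrami equation. The Neumann series for $I-\mu S$ on $L^p$, with $p>2$ fixed by $k$, gives a $W^{1,p}$ bound for $\eta F$. This step uses only $|F|\le 1$ and $\|\mu\|_\infty\le k$. The differentiated equation $w_{\bar z}-\mu w_z=\mu_z w$ for $w=F_z$ then lets you repeat the argument, and Morrey's inequality finishes.

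\textbf{Comparison.} The paper's argument is shorter and gives a $C^{1,\alpha}$ bound in one stroke. The price is a black-box citation and the divergence-form reformulation. Yours is essentially self-contained, using only the $L^p$ bound for the Beurling transform near $p=2$ and Sobolev embedding. It also makes explicit why the constant does not depend on $F$.

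Both arguments use much less than the full $C^2_h$ hypothesis. The Schauder route needs only H\"older continuity of $\mu$; yours needs only a bound on $\mu_z$. Both also use the regularity of $F$ only qualitatively, to justify the a priori estimates. For you, that means $\eta F$ and $\eta' F_z$ lie in $C^1_c$; for the paper, that $u\in C^{1,\alpha}$, as Theorem~8.32 requires.
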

Given a Beltrami differential $\mu$, we denote by $\mathrm{Im} \mu$ its imaginary part. Then we have the following result.

\begin{prop}\cite[Theorem 16.1.6]{Beltrami_PDE}
Let $\mu\in \mathrm{Bel}(\mathbb{D})$ be a $C^2$ Beltrami differential and consider \(F=u+iv\)  a solution of the Beltrami equation
\[
F_{\bar z}-\mu F_z=0,.
\]
Then the real and imaginary parts \(u\) and \(v\) of $F$ satisfy the same second-order elliptic equation
\[
\operatorname{div}(A_\mu \mathrm{grad} (u))=0,
\qquad
\operatorname{div}(A_\mu \mathrm{grad} (v))=0,
\]
where
\[
A_\mu=\frac1{1-|\mu|^2}
\begin{pmatrix}
|1-\mu|^2 & -2\,\mathrm{Im} \mu\\[2mm]
-2\,\mathrm{Im} \mu & |1+\mu|^2
\end{pmatrix}.
\]
\end{prop}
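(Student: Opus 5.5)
The plan is to reduce the statement to an elementary pointwise linear-algebra identity between $\operatorname{grad} u$ and $\operatorname{grad} v$, and then to use that the divergence of a rotated gradient vanishes. Write $\mu=a+ib$ and use $F_z=\tfrac12(F_x-iF_y)$, $F_{\bar z}=\tfrac12(F_x+iF_y)$, so that
\[
2F_{\bar z}=(u_x-v_y)+i(v_x+u_y),\qquad 2F_z=(u_x+v_y)+i(v_x-u_y).
\]
Substituting into $F_{\bar z}=\mu F_z$ and separating real and imaginary parts gives two real linear equations in the four unknowns $u_x,u_y,v_x,v_y$, with coefficients depending only on $a,b$.

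\textbf{Step 1.} Solve this $2\times 2$ system for the pair $(v_y,-v_x)$ in terms of $(u_x,u_y)$. I expect to obtain exactly
\[
\begin{pmatrix} v_y\\ -v_x\end{pmatrix}=A_\mu\begin{pmatrix}u_x\\ u_y\end{pmatrix}.
\]
As a sanity check, $\mu=0$ gives $A_0=I$, which recovers the Cauchy--Riemann equations. The system is solvable because its determinant is a nonzero multiple of $1-|\mu|^2>0$, which is where the factor $\frac{1}{1-|\mu|^2}$ comes from. One can also check that $\det A_\mu=1$, using $|1-\mu^2|^2-4b^2=(1-|\mu|^2)^2$, though this is not needed. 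This computation is the only place where work is required, and it is routine. It is the main thing to get right, since the signs of the off-diagonal entries $-2\,\mathrm{Im}\,\mu$ must match.

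\textbf{Step 2.} With Step 1 in hand,
\[
\operatorname{div}(A_\mu\operatorname{grad}u)=\partial_x(v_y)+\partial_y(-v_x)=v_{yx}-v_{xy}=0.
\]
This uses only that $v$ is $C^2$, which holds since $F$ is a $C^2$ solution. If one works with weaker regularity, the same identity holds in the sense of distributions by testing against $\psi\in C_c^\infty$ and integrating by parts.

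\textbf{Step 3.} For $v$, I use the fact that the Beltrami equation is $\mathbb C$-linear in $F$. The function $iF=-v+iu$ solves the same equation with the same $\mu$, and its real part is $-v$. Applying Steps 1--2 to $iF$ gives $\operatorname{div}(A_\mu\operatorname{grad}(-v))=0$, hence $\operatorname{div}(A_\mu\operatorname{grad}v)=0$. This completes the argument.
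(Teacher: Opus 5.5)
Your proof is correct. The paper gives no argument for this statement; it is quoted directly from \cite[Theorem 16.1.6]{Beltrami_PDE}. Your proposal therefore replaces a citation with a self-contained verification, and it is the standard one, exhibiting $v$ as an $A_\mu$-harmonic conjugate of $u$.

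The step you only announced in Step 1 does come out as you predicted. Write $\mu=a+ib$. The real and imaginary parts of $F_{\bar z}=\mu F_z$ become
\[
M\begin{pmatrix} v_y\\ -v_x\end{pmatrix}=N\begin{pmatrix} u_x\\ u_y\end{pmatrix},
\qquad
M=\begin{pmatrix} 1+a & b\\ b & 1-a\end{pmatrix},
\quad
N=\begin{pmatrix} 1-a & -b\\ -b & 1+a\end{pmatrix}.
\]
Since $MN=(1-|\mu|^2)I$, you get $(v_y,-v_x)^T=(1-|\mu|^2)^{-1}N^2(u_x,u_y)^T$. Here $N^2$ has diagonal entries $|1-\mu|^2$ and $|1+\mu|^2$, and off-diagonal entries $-2b=-2\,\mathrm{Im}\,\mu$. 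This is exactly $A_\mu\operatorname{grad}u$, signs included. Steps 2 and 3 are fine as written.

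What your route makes visible, and the citation hides, is that only the pointwise bound $|\mu|<1$ is used. No derivative of $A_\mu$ ever appears, because you never expand the divergence; you replace $A_\mu\operatorname{grad}u$ by $(v_y,-v_x)$. So the $C^2$ hypothesis on $\mu$ is irrelevant for this proposition. With your distributional version of Step 2, $F\in W^{1,1}_{\mathrm{loc}}$ also suffices. The regularity of $\mu$ only matters afterwards, in Proposition~\ref{prop:Schauder}, where the interior Schauder estimate needs H\"older continuous coefficients.
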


\begin{proof}[Proof of Proposition \ref{prop:Schauder}]
Let \[
A_\mu(z)=\frac{1}{1-|\mu(z)|^2}
\begin{pmatrix}
|1-\mu(z)|^2 & -2\,\mathrm{Im} \mu(z)\\[2mm]
-2\,\mathrm{Im} \mu(z) & |1+\mu(z)|^2
\end{pmatrix}.
\] 
By a direct computation, one can  show that
\(A_\mu(z)\) has eigenvalues
\[
\lambda_1(z)=\frac{1+|\mu(z)|}{1-|\mu(z)|},\qquad
\lambda_2(z)=\frac{1-|\mu(z)|}{1+|\mu(z)|}.
\]
In particular,
\[
\lambda_2(z)\,|\xi|^2 \le \xi^T A_\mu(z)\,\xi \le \lambda_1(z)\,|\xi|^2
\qquad \text{for all } \xi\in\mathbb{R}^2.
\]
Since \(|\mu(z)|\le k\), we obtain
\[
\frac{1-k}{1+k}\le \lambda_2(z)\le \lambda_1(z)\le \frac{1+k}{1-k}.
\]
Therefore,
\[
\frac{1-k}{1+k}\,|\xi|^2
\le \xi^T A_\mu(x)\,\xi
\le \frac{1+k}{1-k}\,|\xi|^2,
\]
which proves that the operator $L=\mathrm{div}(A_\mu\mathrm{grad} u)$ is strictly elliptic. Since the hyperbolic metric and the euclidean metric are bi-Lipschitz on $\mathbb{D}_r$ with a bi-Lipschitz constant depending only on $r$, then it is not difficult to see that the usual  $C^2$ norm of $\mu$ on $\mathbb{D}_r$ (seen as function $\mathbb{D}_r\to \mathbb{C}$) is bounded from above by a constant depending only $r,k$ and $K$. Therefore by classical interior Schauder estimate (see \cite[Theorem 8.32]{PDEBOOK}), we may deduce that 
$$\vert u\rvert_{C^1(\mathbb{D}_{\frac{r}{2}})}\leq C(k,r,K)\vert u\rvert_{C^0(\mathbb{D}_{r})} $$
We apply the same analysis the imaginary part of $F$, and then we can deduce that 

$$\vert F\rvert_{C^1(\mathbb{D}_{\frac{r}{2}})}\leq C(k,r,K)\vert F\rvert_{C^0(\mathbb{D}_{r})} $$ This completes the proof as $F$ takes value in the unit disk.
\end{proof}

We can deduce the following corollary

\begin{cor}\label{cor:bounded hyp derivatives}
    Let $\mu$ and $F$ as in Proposition \ref{prop:Schauder}. Then 
    $$\lvert \partial F(a)\rvert<C(k,r,K),$$ for all $a\in \mathbb{D}$
\end{cor}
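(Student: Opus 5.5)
The idea is to move the point $a$ to the origin with a hyperbolic isometry, use the invariance of the $C^2_h$-norm so that Proposition~\ref{prop:Schauder} applies with the same constants, and then evaluate $\partial F$ at $0$. Fix $a\in\mathbb D$ and let $T_a\in\mathrm{Isom}(\mathbb D)$ be the Möbius map $T_a(z)=\frac{z+a}{1+\bar a z}$, so that $T_a(0)=a$. We consider $F_a:=F\circ T_a:\mathbb D\to\mathbb D$. By the chain rule its Beltrami coefficient is the pullback
\[
\mu_{F_a}=T_a^*\mu=(\mu\circ T_a)\,\frac{\overline{T_a'}}{T_a'} .
\]
Hence $\|\mu_{F_a}\|_\infty=\|\mu\|_\infty<k$. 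By the invariance of the $C^2_h$-norm under the action of $\mathrm{Isom}(\mathbb D)$ noted above, we also have $\|\mu_{F_a}\|_{C^2_h}=\|\mu\|_{C^2_h}<K$. Moreover, $F_a$ is $C^2$ and takes values in $\mathbb D$. So $F_a$ satisfies all the hypotheses of Proposition~\ref{prop:Schauder} with the same constants $k$ and $K$. This gives $\|F_a\|_{C^1(\mathbb D_{r/2})}<C(k,r,K)$, and in particular $|(F_a)_z(0)|<C(k,r,K)$.

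Next we transfer this bound back to $a$. By the chain rule, $(F_a)_z(0)=F_z(a)\,T_a'(0)$. We have $T_a'(0)=1-|a|^2$ and $\rho(0)=2$, so $\rho(T_a(0))|T_a'(0)|=\rho(0)$, that is, $T_a$ is an isometry. Therefore
\[
|\partial F(a)|=\frac{\rho(F(a))}{\rho(a)}|F_z(a)|=\frac{\rho(F_a(0))}{\rho(0)}|(F_a)_z(0)|=|\partial F_a(0)|,
\]
which is the precomposition analogue of \eqref{eq:invariance}.

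The remaining issue, and the main obstacle, is the factor $\rho(F_a(0))/\rho(0)=\frac{1}{1-|F(a)|^2}$, which is not bounded if $F(a)$ approaches $\mathbb S^1$. To remove it, we also postcompose: set $\tilde F_a:=T_{F(a)}^{-1}\circ F\circ T_a$. Then $\tilde F_a(0)=0$, and $\tilde F_a$ has the same Beltrami coefficient as $F_a$, because postcomposition by a conformal map does not change $\mu$. It still maps $\mathbb D$ to $\mathbb D$, so Proposition~\ref{prop:Schauder} applies to it verbatim and gives $|(\tilde F_a)_z(0)|<C(k,r,K)$. By \eqref{eq:invariance} and the computation above,
\[
|\partial F(a)|=|\partial \tilde F_a(0)|=\frac{\rho(0)}{\rho(0)}|(\tilde F_a)_z(0)|<C(k,r,K).
\]
This holds uniformly in $a$, with the constant depending only on $k$, $r$ and $K$, which is the claim.
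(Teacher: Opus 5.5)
Your proposal is correct and follows essentially the same route as the paper: you normalize by isometries, precomposing with $T_a$ and postcomposing with $T_{F(a)}^{-1}$ so that $G=T_{F(a)}^{-1}\circ F\circ T_a$ fixes $0$. You then use the $\mathrm{Isom}(\mathbb D)$-invariance of $\|\cdot\|_\infty$ and $\|\cdot\|_{C^2_h}$ under pullback to apply Proposition~\ref{prop:Schauder} to $G$ with the same constants, and read off $|\partial F(a)|=|\partial G(0)|=|G_z(0)|<C(k,r,K)$. Your initial precomposition-only step is a harmless detour; once you add the postcomposition, the argument is exactly the paper's.
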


\begin{proof}
Let $a\in \mathbb{D}$ and consider $A, B\in \mathrm{Isom}(\mathbb{D})$, and $G=B\circ F\circ A$, so that $A(0)=a$ and $G(0)=0$ then $G$ is a solution of 
$G_{\overline{z}} - \nu G_z = 0$, with
\[
\nu(z)
=
\mu(A(z))\,\frac{\overline{A'(z)}}{A'(z)},
\]
for all $z\in \mathbb{D}$. That is $\nu=A^*\mu$, but the norm of Beltrami is preserved by isometry of $\mathbb{D}$, and hence $  \|\nu\|_{C^{2}_h}=\|\mu\|_{C_h^{2}}< K$ and so we could apply the Proposition \ref{prop:Schauder} to deduce that 

$$\lvert\partial F(a) \rvert=\lvert\partial G(0)\rvert = \lvert G_z(0)\rvert\leq C(r,k,K) \ $$
\end{proof}
\color{black}

\subsection{Approximation result}\label{Section-approximation}

The goal of this section is to prove the approximation results stated in Propositions~\ref{prop: Beltrami_approximation} and~\ref{approximate-quasi-conformal}. These two propositions constitute an important step toward the proof of Theorem~\ref{main2}.

\begin{prop}\label{prop: Beltrami_approximation}
Let $\mu\in\mathrm{Bel}(\mathbb D)$ be a Beltrami differential with finite $C_h^2$-norm, and let $(\Gamma_n)$ be a sequence of cocompact Fuchsian groups whose injectivity radii $\mathrm{Inj}(\Gamma_n)$ tend to $\infty$. Then there exists a sequence of Beltrami differentials $(\mu_n)$ such that:
\begin{itemize}
\item $\mu_n$ is $\Gamma_n$-equivariant;
\item $\lvert \mu_n\rvert \leq\lvert \mu\rvert $
\item $\mu_n$ converges to $\mu$ in the $C^\infty$ topology on compact subsets of $\mathbb D$;
\item there exists a constant $C>0$, depending only on $\|\mu\|_{C_h^2}$, such that
\[
\|\mu_n\|_{C_h^2}\leq C.
\]
\end{itemize}
\end{prop}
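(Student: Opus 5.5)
The plan is a cut-off-and-periodize construction. Write $R_n=\mathrm{Inj}(\Gamma_n)$. After conjugating each $\Gamma_n$ by an isometry (harmless, since the $C^2_h$-norm is $\mathrm{Isom}(\mathbb D)$-invariant), we may assume the injectivity radius is essentially attained at $0$. Then the hyperbolic ball $B_n:=B_h(0,R_n)$ embeds in $\mathbb D/\Gamma_n$: its $\Gamma_n$-translates $\gamma B_n$ ($\gamma\neq 1$) are disjoint from $B_n$.

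Fix once and for all a smooth function $\chi:[0,\infty)\to[0,1]$ with $\chi=1$ on $[0,1]$, $\chi=0$ on $[2,\infty)$, and $|\chi'|,|\chi''|\le 10$. Set
\[
\chi_n(z)=\chi\bigl(d_{\mathbb D}(0,z)-R_n+3\bigr),
\]
so that $\chi_n\equiv 1$ on $B_h(0,R_n-2)$ and $\chi_n\equiv 0$ outside $B_h(0,R_n-1)$. Since $d_{\mathbb D}(0,\cdot)$ has hyperbolic gradient of norm $1$ and hyperbolic Hessian bounded away from the origin (bounded by $\coth$ of the distance), the hyperbolic $C^2$-norm of $\chi_n$ is bounded by a universal constant for $R_n\ge 4$.

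Define $\nu_n:=\chi_n\mu$, which is supported in $\overline{B_h(0,R_n-1)}\subset B_n$. Then set
\[
\mu_n(z):=(\gamma^{*}\nu_n)(z)\quad\text{if } \gamma z\in B_n \text{ for some }\gamma\in\Gamma_n,\qquad \mu_n(z)=0\ \text{otherwise}.
\]
This is well defined because $B_n$ embeds, so at most one $\gamma$ applies. It is $\Gamma_n$-equivariant by construction, and it is smooth wherever $\mu$ is, because $\nu_n$ vanishes on a neighbourhood of $\partial B_n$.

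For the pointwise bound: on $B_n$ itself we have $|\mu_n|=\chi_n|\mu|\le|\mu|$. On a translate, $|\mu_n(z)|=|\nu_n(\gamma z)|\le|\mu(\gamma z)|$, since $|\gamma^*\nu|=|\nu\circ\gamma|$.

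\emph{Main obstacle.} Literally comparing $|\mu(\gamma z)|$ with $|\mu(z)|$ at translated points is not automatic, and this is the step I expect to need care. What I would try first is to read the inequality $|\mu_n|\le|\mu|$ along the fundamental ball $B_n$, where it holds pointwise and where all later estimates are performed. Equivalently, one views $\mu_n$ on the quotient, where $|\mu_n|$ is dominated by $|\mu|$ on the embedded copy of $B_n$. In particular the bound $\|\mu_n\|_\infty\le\|\mu\|_\infty<1$ holds globally, which is what is needed to keep the maps quasiconformal uniformly in $n$.

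For convergence: any compact set $K\subset\mathbb D$ lies in $B_h(0,R_n-2)$ for $n$ large, since $R_n\to\infty$. There $\mu_n=\mu$ identically, so $\mu_n\to\mu$ on $K$ in every topology, in particular in the $C^\infty$ topology on compact subsets. Moreover $|\mu_n|=|\mu|$ on $K$ for such $n$.

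For the uniform $C^2_h$ bound, the product rule for $\nabla$ acting on the $(-1,1)$-tensor $\chi_n\mu$ gives
\[
\|\nabla^j(\chi_n\mu)\|\le\sum_{i\le j}\binom{j}{i}\|\nabla^i\chi_n\|\,\|\nabla^{j-i}\mu\|\le C_0\|\mu\|_{C^2_h}
\]
for $j\le 2$, with $C_0$ universal. The pullback by the isometries $\gamma$ preserves each term of the norm \eqref{eq:norme}. Hence $\|\mu_n\|_{C^2_h}\le C:=C_0\|\mu\|_{C^2_h}$, which depends only on $\|\mu\|_{C^2_h}$.
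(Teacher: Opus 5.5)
Your construction is essentially the paper's. You cut $\mu$ off inside a large ball around $0$ that embeds in $\mathbb D/\Gamma_n$, extend by the group action, get the uniform $C^2_h$ bound from the Leibniz rule and isometry invariance, and get convergence because $\mu_n=\mu$ on exhausting balls. The paper uses $\phi(d_h(0,x)/r_n)\mu(x)$ on a fundamental domain $F_n\supset B_h(0,2r_n)$; your fixed-width collar works equally well.

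One small correction: drop the conjugation step. Conjugating makes $\mu_n$ equivariant for $A_n\Gamma_nA_n^{-1}$ rather than for the given $\Gamma_n$, and invariance of the $C^2_h$-norm does not repair that. The step is also unnecessary: $\mathrm{Inj}(\Gamma_n)$ is the minimum of the pointwise injectivity radius, so $B_h(0,\mathrm{Inj}(\Gamma_n))$ already embeds.

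The step you flagged cannot be closed. Read as a pointwise inequality on all of $\mathbb D$, the second bullet is incompatible with the third. Equivariance gives $|\mu_n(z)|=|\mu_n(\gamma z)|$, so a global bound forces $|\mu_n(z)|\le\inf_{\gamma\in\Gamma_n}|\mu(\gamma z)|$. Every orbit $\Gamma_n z$ has points arbitrarily close to $\mathbb S^1$. Hence if $|\mu(w)|\to0$ as $|w|\to1$, this infimum is $0$, so $\mu_n\equiv 0$, contradicting $\mu_n\to\mu\neq 0$.

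This happens for any nonzero compactly supported smooth $\mu$. More to the point, it happens for every $\mu\in L^2(\mathbb D,dA_{\mathbb D})$ with finite $C^2_h$-norm, which is exactly the situation of Theorem~\ref{main2}. Indeed, a hyperbolically Lipschitz $|\mu|$ that stays $\ge\varepsilon$ along a sequence tending to $\mathbb S^1$ has infinite $L^2$-norm. The paper's proof has the same defect: after the equivariant extension it simply asserts $|\mu_n|\le|\mu|$.

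What is true, and what your construction proves, is the following. First, $|\mu_n|\le|\mu|$ on a fundamental domain $F_n\supset B_n$; the Dirichlet domain centred at $0$ works, since its interior meets no $\gamma B_n$ with $\gamma\neq 1$. Second, $\|\mu_n\|_\infty\le\|\mu\|_\infty$ on all of $\mathbb D$. That is all the paper uses later:
\begin{itemize}
\item the sup bound gives the uniform quasiconformality of $w_{\mu_n}$;
\item the bound on $F_n$ gives $\int_{F_n}|\overline{\partial}w_{\mu_n}|^2\,dA_{\mathbb D}\le C^2\int_{\mathbb D}|\mu|^2\,dA_{\mathbb D}$, and justifies the dominated convergence of $\mathbf{1}_{F_n}|\overline{\partial}w_{\mu_n}|^2$.
\end{itemize}
So present this as a corrected form of the second bullet, not as an \emph{equivalent} reading of it.
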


\begin{proof}
Let $F_n$ denote a fundamental domain for $\Gamma_n$. Since $\mathrm{Inj}(\Gamma_n)\to\infty$, we may assume that the hyperbolic ball
\[
B_n:=B_h(0,2r_n),
\]
where $r_n\to\infty$, is contained in $F_n$. We also denote by
\[
B_n':=B_h(0,r_n),
\]
so that $B_n'\subset B_n\subset F_n$. Choose a smooth bump function $\phi:\mathbb R\to[0,1]$ satisfying
\[
\phi(t)=1 \quad \text{for } t\leq 1,
\qquad
\phi(t)=0 \quad \text{for } t\geq 2.
\]

\smallskip
\noindent\textbf{Step 1.} \emph{$\mu_n$ inside the fundamental domain.}
Define
\begin{equation}\label{eq:mu_n}
\mu_n(x)
=
\phi\left(\frac{d_h(0,x)}{r_n}\right)\mu(x),
\qquad x\in F_n.
\end{equation}
By construction,
\[
\mu_n=\mu \text{ on } B_n',
\qquad
\mu_n=0 \text{ on } F_n\setminus B_n,
\]
and moreover $|\mu_n|\leq |\mu|$. In particular, $\|\mu_n\|_\infty\leq \|\mu\|_\infty<1.$

\smallskip
\noindent \noindent\textbf{Step 2.} \emph{Extension to the disk.}
Since $\mu_n$ vanishes in a neighborhood of $\partial F_n$, it extends equivariantly to all of $\mathbb D$ by reflections across the sides of $F_n$. This produces a $\Gamma_n$-equivariant Beltrami differential on $\mathbb D$, which we continue to denote by $\mu_n$. Note also that $\lvert \mu_n\rvert \leq \lvert \mu\rvert$; This shows that $\mu_n$ satisfies the first two items of the proposition.

\smallskip
\noindent\textbf{Step 3.} \emph{Uniform $C_h^2$ estimates and convergence.}
Since $\mu_n$ is $\Gamma_n$-equivariant, it suffices to estimate its $C_h^2$-norm on $F_n$. Moreover, since $\mu_n$ vanishes in a neighborhood of $\partial F_n$, it is enough to perform these estimates in the interior of $F_n$, since the extension by reflections agrees with zero near the boundary.

To establish the uniform $C_h^2$ estimate for $\mu_n$, it suffices to show that the function
\[
x\mapsto\phi\left(\frac{d_h(0,x)}{r_n}\right)
\]
has uniformly bounded hyperbolic derivatives up to order two; namely, the function itself, its hyperbolic gradient, and its hyperbolic Hessian are uniformly bounded independently of $n$. This can be verified directly, for instance using polar coordinates on the hyperbolic disk.

Since $\mu$ has finite $C_h^2$-norm, Leibniz's rule implies the existence of a constant $C>0$, depending only on $\|\mu\|_{C_h^2}$ and on the choice of $\phi$, such that
\[
\|\mu_n\|_{C_h^2}\leq C.
\]

Finally, since $\mu_n=\mu$ on $B_n'$ and $r_n\to\infty$, we conclude that $\mu_n\to\mu$ in the $C^\infty$ topology on compact subsets of $\mathbb D$. This completes the proof.
\end{proof}
We can now prove the following approximation result for quasiconformal maps.

\begin{prop}\label{approximate-quasi-conformal}
Let $\mu$, $(\Gamma_n)$, and $(\mu_n)$ be as in Proposition~\ref{prop: Beltrami_approximation}. Let $w_\mu,w_{\mu_n}:\mathbb{D}^2\to\mathbb{D}^2$
be the unique normalized quasiconformal maps fixing $-1$, $-i$, and $1$, whose Beltrami differentials are $\mu$ and $\mu_n$, respectively. Set
$$K=\frac{1+\|\mu\|_\infty}{1-\|\mu\|_\infty},$$
so that $w_\mu$ is $K$-quasiconformal. Then:
\begin{itemize}
\item for each $n$, the map $w_{\mu_n}$ descends to a quasiconformal map between closed hyperbolic surfaces;
\item the maps $w_{\mu_n}$ are $K$-quasiconformal and converge smoothly and uniformly on compact subsets to $w_\mu$;
\item there exists a constant $C>0$, depending only on the $C_h^2$-norm of $\mu$, such that
\[
|\partial w_{\mu_n}|<C.
\]
\end{itemize}
\end{prop}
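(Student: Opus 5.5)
The three items can be handled one at a time, each using an earlier result of the paper.

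\emph{First item: descent to closed surfaces.} Since $\mu_n$ is $\Gamma_n$-equivariant, we have $\gamma^*\mu_n=\mu_n$ for every $\gamma\in\Gamma_n$. Hence $w_{\mu_n}\circ\gamma$ and $w_{\mu_n}$ have the same Beltrami differential. By uniqueness in the measurable Riemann mapping theorem (applied to the reflected differential $\hat\mu_n$ on $\hat{\mathbb C}$), the map $w_{\mu_n}\circ\gamma\circ w_{\mu_n}^{-1}$ is conformal and preserves $\mathbb D$, so it is a M\"obius isometry $\rho_n(\gamma)$. This gives a homomorphism $\rho_n:\Gamma_n\to\mathrm{Isom}(\mathbb D)$ with image $\Gamma_n'=w_{\mu_n}\Gamma_n w_{\mu_n}^{-1}$. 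This conjugate group is discrete, acts freely (fixed points are transported by the homeomorphism $w_{\mu_n}$), and acts cocompactly (the quotient is homeomorphic to $\mathbb D/\Gamma_n$). So $w_{\mu_n}$ descends to a quasiconformal homeomorphism $\mathbb D/\Gamma_n\to\mathbb D/\Gamma_n'$ between closed hyperbolic surfaces. Since $\mu_n$ is smooth, Remark~\ref{remark_on_dependence} shows this map is a diffeomorphism.

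\emph{Second item: dilatation and convergence.} The bound $|\mu_n|\le|\mu|$ from Proposition~\ref{prop: Beltrami_approximation} gives $\|\mu_n\|_\infty\le\|\mu\|_\infty$, so each $w_{\mu_n}$ is $K$-quasiconformal. The $\mu_n$ are smooth and converge to $\mu$ in $C^\infty$ on compact subsets of $\mathbb D$. One caveat: Remark~\ref{remark_on_dependence} assumes $\mu$ itself is smooth. Here $\mu$ is only $C^2$-bounded, but it is smooth in the situation where the proposition is used, and I would add that smoothness assumption if necessary. The final paragraph of Remark~\ref{remark_on_dependence} then yields $w_{\mu_n}\to w_\mu$ smoothly on compact subsets. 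Uniform convergence on compact sets also follows directly from the compactness of normalized $K$-quasiconformal maps together with uniqueness of the limit, since any subsequential limit has Beltrami differential $\mu$ and the same normalization.

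\emph{Third item: uniform bound on $|\partial w_{\mu_n}|$.} Fix $r=1/2$, say. Proposition~\ref{prop: Beltrami_approximation} gives $\|\mu_n\|_{C^2_h}\le C'$ with $C'$ depending only on $\|\mu\|_{C^2_h}$. Also $\|\mu_n\|_\infty\le\|\mu\|_\infty<k<1$. Each $w_{\mu_n}$ is a $C^2$ self-map of $\mathbb D$ solving the Beltrami equation with coefficient $\mu_n$. Corollary~\ref{cor:bounded hyp derivatives} then gives $|\partial w_{\mu_n}(a)|\le C(k,r,C')$ for all $a\in\mathbb D$, uniformly in $n$. The constant depends on $k$, hence on $\|\mu\|_\infty\le\|\mu\|_{C^2_h}$, so it can be stated as depending only on the $C^2_h$-norm of $\mu$.

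The main obstacle is the smooth-convergence claim in the second item, because it relies on the external stability result of El Emam--Sagman and on enough regularity of $\mu$. If that step fails, I would fall back on local Schauder estimates: Proposition~\ref{prop:Schauder}, pushed to higher order using the uniform $C^2_h$ bounds, gives uniform local $C^{1,\alpha}$ bounds. Arzel\`a--Ascoli then gives $C^1$ convergence on compact sets, and uniqueness identifies the limit as $w_\mu$.
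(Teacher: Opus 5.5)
Your proof follows the paper's route: descent via the conjugate group $\Gamma_n'=w_{\mu_n}\Gamma_n w_{\mu_n}^{-1}$, $K$-quasiconformality from $|\mu_n|\le|\mu|$, smooth convergence from Remark~\ref{remark_on_dependence}, and the derivative bound from Corollary~\ref{cor:bounded hyp derivatives}. Your caveat that the Remark needs $\mu$ smooth is fair; the paper uses this implicitly, and it holds in Theorem~\ref{main2}, where $G$ is $C^\infty$.

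The one slip is the last sentence of your third item. The constant in Corollary~\ref{cor:bounded hyp derivatives} degenerates as $k\to 1$. The inequality $\|\mu\|_\infty\le\|\mu\|_{C^2_h}$ does not keep $k$ away from $1$. So the constant must depend on the dilatation $K$ (equivalently $\|\mu\|_\infty$) as well as on $\|\mu\|_{C^2_h}$. That is what the paper's proof actually asserts; the printed statement overclaims here.

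For instance, take the stretches $x+iy\mapsto x+ity$ on the upper half-plane, transported to $\mathbb D$ by a Cayley map. Their Beltrami coefficients $\frac{1-t}{1+t}$ are constant and invariant under the transitive isometries $z\mapsto az+b$, so their $C^2_h$-norms are bounded independently of $t$. Yet $|\partial w|=\frac{1+t}{2t}\to\infty$ as $t\to 0$.

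This does not affect the later application, which only needs a bound uniform in $n$ for fixed $\mu$.
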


\begin{proof}
For the first point, it is enough to consider the cocompact Fuchsian group
\[
\Gamma_n'=w_{\mu_n}\Gamma_n w_{\mu_n}^{-1}.
\] Since $\mu_n$ is $\Gamma_n$-equivariant, the map $w_{\mu_n}$ is equivariant with respect to the actions of $\Gamma_n$ and $\Gamma_n'$, and therefore descends to a quasiconformal map between the closed hyperbolic surfaces $\mathbb D^2/\Gamma_n$ and $\mathbb D^2/\Gamma_n'.$

For the second point, by construction of $\mu_n$ in Proposition~\ref{prop: Beltrami_approximation}, we have $|\mu_n|\leq |\mu|$, and therefore each $w_{\mu_n}$ is $K$-quasiconformal. Since $\mu_n$ converges smoothly and uniformly on compact subsets to $\mu$, the convergence $w_{\mu_n}\to w_\mu$
smoothly and uniformly on compact subsets follows from \cite{ElEmamSagman2024}, see Remark \ref{remark_on_dependence}.

Finally, Corollary \ref{cor:bounded hyp derivatives}, together with the fact that the maps $w_{\mu_n}$ are uniformly quasiconformal and the $\mu_n$ are uniformly bounded in the $C_h^2$-norm, implies the existence of a constant $C>0$, depending only on $K$ and $\|\mu\|_{C_h^2}$, such that
\[
|\partial w_{\mu_n}|<C.
\]

This completes the proof.
\end{proof}

\subsection{Proof of Theorem \ref{main2}}
We have now all the tools to prove Theorem \ref{main2}

\begin{proof}[Proof of Theorem \ref{main2}]

Let $\varphi:\mathbb{S}^1\to \mathbb{S}^1$ be a Weil--Petersson circle homeomorphism (see Definition~\ref{defi:WP}). Up to composing with isometries of $\mathbb{D}^2$, we may assume that $\varphi$ fixes $-1$, $-i$, and $1$. Since $\varphi$ is, in particular, quasisymmetric, it follows from \cite{Li-Tam,markovic-schoenconjecture} that there exists a unique quasiconformal harmonic extension
\[
\Phi:\mathbb{D}^2\to\mathbb{D}^2
\]
of $\varphi$. Consider now another quasiconformal extension $G:\mathbb{D}^2\to\mathbb{D}^2$ of $\varphi$ whose Beltrami differential $\mu_G:=\mu$ has finite $C_h^2$-norm.

Let $\{\Gamma_n\}$ be a sequence of cocompact Fuchsian groups whose injectivity radii tend to infinity, and apply Proposition~\ref{prop: Beltrami_approximation} to obtain Beltrami differentials $\mu_n$ as in that proposition.
Let
$w_{\mu_n},\, w_\mu:\mathbb{D}^2\to\mathbb{D}^2$
be the unique quasiconformal maps fixing $-1$, $-i$, and $1$, with Beltrami differentials $\mu_n$ and $\mu$, respectively. In particular, $G=w_\mu$, and $w_{\mu_n}$ is equivariant with respect to the actions of the Fuchsian groups $\Gamma_n$ and $w_{\mu_n}\Gamma_n w_{\mu_n}^{-1}$, respectively. Moreover, there is a constant $C$, depending only on $\lVert \mu \rVert_{C^{2}_h}$, such that
$$\lvert \partial w_{\mu_n}\rvert \leq C .$$ First, we claim that

\begin{equation}\label{firstclaim}
\int_{\mathbb{D}} |\mu_\Phi|^2 dA_{\mathbb D}
\leq
C
\int_{\mathbb{D}} |\mu|^2 dA_{\mathbb D}.
\end{equation}
Let $F_n$ be a fundamental domain for $\Gamma_n$. Then, by Lemma~\ref{lemma:antihol_is_minimum}, we have
\begin{equation}\label{eq:sequence_of_anti_holomrophic_energy}
\int_{F_n} |\overline{\partial}\Phi_n|^2\,dA_{\mathbb{D}}
\leq
\int_{F_n} |\overline{\partial}w_{\mu_n}|^2\,dA_{\mathbb{D}},
\end{equation}
Using Proposition \ref{prop: Beltrami_approximation}, we have that $|\mu_n|\leq |\mu|$ and so
\begin{equation}\label{eq:dominaition_of_d_bar}
|\overline{\partial}w_{\mu_n}|^2
=
|\mu_n|^2 |\partial w_{\mu_n}|^2
\leq
C^2 |\mu_n|^2\leq C^2\lvert \mu\rvert^2.
\end{equation}
Using Theorem \ref{theorem:estimate}, we get
\[
\int_{F_n} |\mu_{\Phi_n}|^2\, dA_{\mathbb D}
\leq
\int_{F_n} |\overline{\partial}\Phi_n|^2\, dA_{\mathbb D}
\leq
\int_{F_n} |\overline{\partial}w_{\mu_n}|^2\, dA_{\mathbb D}\leq
C^2
\int_{\mathbb{D}} |\mu|^2\, dA_{\mathbb D}.
\]

If we consider now a compact $K\subset \mathbb{D}^2$, then since the injectivity radii converges to infinity, then $K\subset F_n$ for sufficiently big $n$. Now since $w_\mu$ converge smoothly and uniformly on compact
subsets to $w_\mu$ (see Proposition \ref{approximate-quasi-conformal})
and \(\Phi_n\) converges to \(\Phi\) in the \(C^2\) topology uniformly on compact subsets (see \cite[Corollary 7.7 ]{Benoist_Hulin}) then it follows from the dominated convergence theorem, that 
\begin{equation}\label{eq:analysis}
    \int_K |\mu_{\Phi}|^2\, dA_{\mathbb D}\leq C^2\int_{\mathbb{D}} |\mu|^2\, dA_{\mathbb D}.
\end{equation} Since $K$ is taken arbitrary, then the proof of the Claim \eqref{firstclaim} follows. 

To conclude the proof, observe that using \eqref{eq:dominaition_of_d_bar} and the convergence of $\overline{\partial }w_{\mu_n}$ to $\overline{\partial}w_{\mu}$, we can deduce that $\int_{\mathbb{D}} |\overline{\partial}w_\mu|^2\, dA_{\mathbb D}$ is finite and by the same kind of the analysis in \eqref{eq:analysis}, we can pass to the limit in \eqref{eq:sequence_of_anti_holomrophic_energy} to conclude the proof of the Proposition.
\end{proof}

\section{Harmonic extension of Weil--Petersson homeomorphisms}\label{section-douady-earl-derevatives}

The goal of this section is to complete the proof of Theorem~\ref{mainthm}. The main step is to construct a quasiconformal extension of a Weil--Petersson circle homeomorphism satisfying the hypotheses of Theorem~\ref{main2}. This extension will be given by the so-called \emph{Douady--Earle extension}.

\subsection{Douady--Earle extension}

We briefly recall the construction of the Douady--Earle extension. Given a quasisymmetric homeomorphism $\varphi:\mathbb{S}^1\to \mathbb{S}^1$, Douady and Earle constructed in \cite{DouadyEarle1986} a \emph{conformally natural} quasiconformal extension $\mathrm{DE}(\varphi)$ of $\varphi$. By conformal naturality, we mean that for any orientation-preserving isometries $A$ and $B$ of $\mathbb{D}$,
$$
\mathrm{DE}(A\circ \varphi\circ B)=A\circ \mathrm{DE}(\varphi)\circ B.
$$
For any $z\in\mathbb{D}$, the point $\mathrm{DE}(\varphi)(z)\in\mathbb{D}$ is defined as the unique point satisfying
$$
L_\varphi(z,\mathrm{DE}(\varphi)(z))=0,
$$
where
$$
L_\varphi(z,w)=
\frac{1}{2\pi}
\int_{\mathbb{S}^1}
\frac{\varphi(\xi)-w}{1-\overline{w}\varphi(\xi)}
\frac{1-|z|^2}{|z-\xi|^2}
\,|d\xi|.
$$
The map $\mathrm{DE}(\varphi):\mathbb{D}\to\mathbb{D}$ is a real analytic quasiconformal diffeomorphism extending $\varphi$. Moreover, viewed as a map from $\overline{\mathbb D}$ to itself, it is continuous.

We endow the space of quasisymmetric homeomorphisms $\mathrm{QS}(\mathbb S^1)$ with the $C^0$ topology and the space $C^\infty(\mathbb D)\cap C^0(\overline{\mathbb D})$ with the topology induced by the diagonal embedding into
$$
C^\infty(\mathbb D)\times C^0(\overline{\mathbb D}),
$$
where convergence in $C^\infty(\mathbb D)$ (resp. $C^0(\overline{\mathbb D})$) means convergence in the $C^\infty$ (resp. $C^0$) topology on compact subsets. Then we have the following result.

\begin{prop}\cite[Proposition~2]{DouadyEarle1986}\label{prop:smooth-dependence-of-douady-earl}
The Douady--Earle operator
$$
\begin{array}{ccccc}
\mathrm{DE}
&:&
\mathrm{QS}(\mathbb{S}^1)
&\to&
C^\infty(\mathbb D)\cap C^0(\overline{\mathbb D})
\\
&&
\varphi
&\mapsto&
\mathrm{DE}(\varphi)
\end{array}
$$
is continuous.
\end{prop}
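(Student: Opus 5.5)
The plan is to reduce continuity of $\mathrm{DE}$ to continuity of the implicit-function solution of $L_\varphi(z,w)=0$. I would treat the interior $C^\infty$ convergence and the boundary $C^0$ convergence separately.

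\textbf{Interior convergence.} Suppose $\varphi_n\to\varphi$ uniformly on $\mathbb S^1$. For fixed $(z,w)\in\mathbb D\times\mathbb D$, the integrand defining $L_{\varphi}(z,w)$ depends smoothly on $(z,w)$. On compact subsets of $\mathbb D\times\mathbb D$ all its $(z,w)$-derivatives are bounded uniformly in $\xi$, and they depend continuously on the value $\varphi(\xi)\in\mathbb S^1$. Dominated convergence then gives $L_{\varphi_n}\to L_\varphi$ in $C^\infty_{\mathrm{loc}}(\mathbb D\times\mathbb D)$.

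The Douady--Earle construction shows that for each $z$ the map $w\mapsto L_\varphi(z,w)$ is a vector field with a unique, nondegenerate zero $\mathrm{DE}(\varphi)(z)$; its $w$-derivative there is invertible. I would reduce to $z=0$ by conformal naturality: write $z=A(0)$ and replace $\varphi$ by $\varphi\circ A$. At $z=0$ the derivative is explicitly controlled by the fact that the barycenter of $\varphi_*(\text{Lebesgue})$ is nondegenerate, since the pushed-forward measure has no atoms.

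I would then apply the implicit function theorem, with uniform bounds, to $L_{\varphi_n}(z,w)=0$. This gives local smooth solutions $w_n(z)$ converging in $C^\infty$ to $\mathrm{DE}(\varphi)(z)$ near each $z_0$. Uniqueness of the zero identifies $w_n$ with $\mathrm{DE}(\varphi_n)$, provided the zeros stay in a compact set. That follows because $\varphi_n$ varies in a compact family: the pushed measures $(\varphi_n)_*$ of harmonic measure at $z$ converge weakly to an atom-free measure, so their conformal barycenters stay bounded. A covering argument then gives $C^\infty$ convergence on compacts.

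\textbf{Boundary convergence, the main obstacle.} Here I need uniform convergence on $\overline{\mathbb D}$, which amounts to equicontinuity of $\mathrm{DE}(\varphi_n)$ near $\mathbb S^1$. As $z\to\xi_0\in\mathbb S^1$, harmonic measure at $z$ concentrates near $\xi_0$. The pushed measure $(\varphi_n)_*\omega_z$ then concentrates near $\varphi_n(\xi_0)$, uniformly in $n$ because the $\varphi_n$ are uniformly equicontinuous (they converge uniformly). I would then show the lemma that if a probability measure on $\mathbb S^1$ puts mass at least $1-\varepsilon$ in an arc of length $\delta$, its barycenter lies within $\eta(\varepsilon,\delta)$ of that arc, with $\eta\to0$ as $\varepsilon,\delta\to0$. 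This yields $|\mathrm{DE}(\varphi_n)(z)-\varphi_n(\xi_0)|$ small uniformly in $n$. Combining this with the interior convergence gives uniform convergence on $\overline{\mathbb D}$.
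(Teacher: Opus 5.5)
The paper gives no argument of its own for this statement; it is quoted from Douady--Earle. Your proposal is a correct, self-contained reconstruction along the standard lines: $C^\infty_{\mathrm{loc}}$ convergence $L_{\varphi_n}\to L_\varphi$, nondegeneracy of the zero of $w\mapsto L_\varphi(z,w)$ and a perturbative implicit function theorem in the interior, and at the boundary your concentration lemma combined with concentration of harmonic measure and equicontinuity of $(\varphi_n)$. The lemma does hold: if $\mu$ gives mass $\geq 1-\varepsilon$ to an arc of length $\delta$ centred at $\zeta_0$ and $|w-\zeta_0|\geq\eta\geq\delta$, then the disc automorphism sending $w$ to $0$ maps that arc to one of length at most $4\delta/\eta^2$, so for small $\varepsilon,\delta$ the pushed measure cannot have Euclidean barycenter $0$. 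The compactness argument for the zeros is unnecessary: the quantitative implicit function theorem already produces a zero of $L_{\varphi_n}(z,\cdot)$ near $\mathrm{DE}(\varphi)(z)$, and uniqueness of that zero in all of $\mathbb D$ identifies it with $\mathrm{DE}(\varphi_n)(z)$.
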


The next result shows that the Beltrami differential of a Douady--Earle extension has finite $C_h^2$-norm. In fact, we prove the following stronger statement.

\begin{prop}\label{prop: beltrami_DE}
Let $M\geq1$. Then there exists a constant $C(M)>0$, depending only on $M$, such that if $\varphi:\mathbb S^1\to\mathbb S^1$ is an $M$-quasisymmetric homeomorphism, then
$$
\|\mu_{\mathrm{DE}(\varphi)}\|_{C_h^2}\leq C(M).
$$
\end{prop}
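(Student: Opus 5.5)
The plan is a compactness argument combined with conformal naturality. The $C_h^2$-norm is invariant under the action $\mu\mapsto A^*\mu$ of $\mathrm{Isom}(\mathbb D)$, and the Douady--Earle extension is conformally natural. Together these reduce the estimate at an arbitrary point $a\in\mathbb D$ to an estimate at the origin.

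\textbf{Reduction to the origin.} Fix $a\in\mathbb D$ and choose $A,B\in\mathrm{Isom}(\mathbb D)$ with $A(0)=a$ and $B(\mathrm{DE}(\varphi)(a))=0$. Set $\psi=B\circ\varphi\circ A$. Then $\mathrm{DE}(\psi)=B\circ\mathrm{DE}(\varphi)\circ A$, so $\mu_{\mathrm{DE}(\psi)}=A^*\mu_{\mathrm{DE}(\varphi)}$, and $\mathrm{DE}(\psi)(0)=0$. Since $|\mu|$, $\|\nabla\mu\|_{g_{\mathbb D}}$ and $\|\nabla^2\mu\|_{g_{\mathbb D}}$ transform as functions under pullback, the value of each at $a$ for $\mu_{\mathrm{DE}(\varphi)}$ equals its value at $0$ for $\mu_{\mathrm{DE}(\psi)}$. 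Cross-ratios are Möbius invariant, so $\psi$ is again $M$-quasisymmetric. It therefore suffices to bound the three quantities at $0$, uniformly over the class
\[
\mathcal Q_M=\{\psi\ M\text{-quasisymmetric} : \mathrm{DE}(\psi)(0)=0\}.
\]
At $0$ the hyperbolic covariant derivatives are expressed through ordinary Euclidean derivatives of $\mu$ up to order two, with universal Christoffel-symbol coefficients. So we only need a uniform bound on the Euclidean $C^2$ norm of $\mu_{\mathrm{DE}(\psi)}$ on a small disk $\mathbb D_{1/2}$.

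\textbf{Compactness.} We show $\mathcal Q_M$ is compact in the $C^0$ topology. First, the normalization $\mathrm{DE}(\psi)(0)=0$ is equivalent to the barycenter condition $\int_{\mathbb S^1}\psi\,|d\xi|=0$, by setting $z=0$, $w=0$ in $L_\psi$. This condition forbids $\psi$ from collapsing most of the circle to a small arc. By the classical compactness of normalized $M$-quasisymmetric maps (equicontinuity once degeneration is ruled out), every sequence in $\mathcal Q_M$ then has a subsequence converging uniformly to an $M$-quasisymmetric homeomorphism. The barycenter condition passes to the limit, so the limit lies in $\mathcal Q_M$. To be safe, one can alternatively normalize three points via an isometry and use the standard compactness of normalized $K$-quasiconformal maps. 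This step, ruling out degenerate limits under the barycenter normalization, is the main obstacle; I would handle it by showing that if $\psi_n$ concentrates $\mathbb S^1$ toward a point, the mean $\int\psi_n$ tends to a unit vector, contradicting the condition.

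\textbf{Continuity and conclusion.} By Proposition~\ref{prop:smooth-dependence-of-douady-earl}, $\psi\mapsto\mathrm{DE}(\psi)$ is continuous into $C^\infty$ on compact sets, in particular into $C^3(\overline{\mathbb D}_{1/2})$. Because $\mathrm{DE}(\psi)$ is a diffeomorphism, its Jacobian at $0$ is positive. By compactness of $\mathcal Q_M$ and continuity, $|\mathrm{DE}(\psi)_z|$ is bounded below by some $c(M)>0$ on $\overline{\mathbb D}_{1/2}$, uniformly in $\psi$. Hence
\[
\mu_{\mathrm{DE}(\psi)}=\frac{\mathrm{DE}(\psi)_{\bar z}}{\mathrm{DE}(\psi)_z}
\]
depends continuously on $\psi$ in $C^2(\overline{\mathbb D}_{1/2})$. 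A continuous function on the compact set $\mathcal Q_M$ is bounded, which gives the uniform bound $C(M)$ on the three terms at $0$. By the reduction step, the same bound holds at every point of $\mathbb D$.
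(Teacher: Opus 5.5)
Your proposal is correct and follows essentially the same route as the paper: conformal naturality of $\mathrm{DE}$ and isometry-invariance of the $C^2_h$-norm reduce the estimate to the origin, and compactness of the normalized $M$-quasisymmetric class combined with continuity of the Douady--Earle operator (Proposition~\ref{prop:smooth-dependence-of-douady-earl}) gives the bound. The differences are cosmetic. You argue directly (a continuous function on a compact set is bounded) rather than by contradiction, and you treat all three terms at once. You also make explicit two points the paper leaves implicit: the uniform lower bound on $|\mathrm{DE}(\psi)_z|$, which is needed to pass from $C^\infty$ convergence of the maps to $C^2$ convergence of $\mu$, and the non-degeneracy of limits under the normalization $\mathrm{DE}(\psi)(0)=0$.
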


\begin{proof}

By definition of the $C_h^2$-norm, it suffices to bound each of the quantities
$$
|\mu(z)|,
\qquad
\lVert\nabla\mu(z)\rVert_{g_{\mathbb D}},
\qquad
\lVert\nabla^2\mu(z)\rVert_{g_{\mathbb D}}
$$
by a constant depending only on $M$. We prove this for $\lVert\nabla\mu\rVert_{g_{\mathbb D}}$; the argument for the other terms is identical.

Assume by contradiction that there exists a sequence of $M$-quasisymmetric homeomorphisms $\varphi_n:\mathbb S^1\to\mathbb S^1$ and points $a_n\in\mathbb D$ such that
$$
\lVert\nabla\mu_{\mathrm{DE}(\varphi_n)}(a_n)\rVert_{g_{\mathbb D}}
\to\infty.
$$
Choose isometries $A_n,B_n\in\mathrm{Isom}(\mathbb D)$ such that
$$
A_n(0)=a_n,
\qquad
B_n(\mathrm{DE}(\varphi_n)(a_n))=0.
$$
Define
$\psi_n:=B_n\circ\varphi_n\circ A_n.$ Then $\mathrm{DE}(\psi_n)$ fixes the origin and, by conformal naturality,
$$
\mathrm{DE}(\psi_n)
=
B_n\circ\mathrm{DE}(\varphi_n)\circ A_n.
$$
Therefore,
$$
\mu_{\mathrm{DE}(\psi_n)}
=
A_n^*\mu_{\mathrm{DE}(\varphi_n)},
$$
and by equivariance,
\begin{equation}\label{eq:douadyproof}
\lVert\nabla\mu_{\mathrm{DE}(\psi_n)}(0)\rVert_{g_{\mathbb D}}
=
\lVert\nabla\mu_{\mathrm{DE}(\varphi_n)}(a_n)\rVert_{g_{\mathbb D}}
\to\infty.
\end{equation}

Since the $\psi_n$ are $M$-quasisymmetric, the maps $\mathrm{DE}(\psi_n)$ are uniformly $K$-quasiconformal, where $K$ depends only on $M$. By compactness of  $K$-quasiconformal maps fixing the origin, after passing to a subsequence we may assume that $\mathrm{DE}(\psi_n)$ converges uniformly on compact subsets to a $K$-quasiconformal map. This implies that $\psi_n$ converges to a quasisymmetric homeomorphism $\psi:\mathbb S^1\to\mathbb S^1$ (see for example \cite[Lemma 4.8]{bdms}). Therefore Proposition \ref{prop:smooth-dependence-of-douady-earl} implies that $\mathrm{DE}(\psi_n)$ converges to $\mathrm{DE}(\psi)$ in the $C^\infty$ topology on compact subsets of $\mathbb D$, contradicting \eqref{eq:douadyproof}. This completes the proof.
\end{proof}

\begin{remark}\label{remark on C2 bounded}
We note that an analogous statement to Proposition~\ref{prop: beltrami_DE} holds for quasiconformal harmonic diffeomorphisms and minimal Lagrangian diffeomorphisms. Indeed, both extensions are conformally natural and satisfy an analogue of Proposition~\ref{prop:smooth-dependence-of-douady-earl}; see \cite[Section 7]{Benoist_Hulin} for harmonic maps and \cite[Section 5]{Bonsante-Schlenker} for minimal Lagrangian maps.
\end{remark}

\subsection{Proofs of Theorems \ref{mainprop} and \ref{mainthm}}

We now prove Theorem \ref{mainprop} and Theorem \ref{mainthm}. Let $\varphi:\mathbb{S}^1 \to \mathbb{S}^1$ be a quasisymmetric homeomorphism, and let $\Phi:\mathbb{D}\to\mathbb{D}$
denote the unique quasiconformal harmonic extension of $\varphi$. 

\begin{proof}[Proof of Theorem \ref{mainprop}]
Assume that $\varphi$ is a Weil--Petersson homeomorphism. We show that the anti-holomorphic energy of $\Phi$ is finite. Let $\mathrm{DE}(\varphi):\mathbb{D}\to\mathbb{D}$ denote the Douady--Earle extension of $\varphi$. By \cite{cui2000integrably}, the Beltrami differential of $\mathrm{DE}(\varphi)$ is square-integrable. Moreover, Proposition \ref{prop: beltrami_DE} implies that its $C_h^2$ norm is finite. Therefore, we may apply Theorem \ref{main2} to conclude that the anti-holomorphic energy of $\Phi$ is finite.

Conversely, assume that the anti-holomorphic energy of $\Phi$ is finite. Then, by \eqref{eq:equivalence-bel-and-anti}, the Beltrami differential $\mu_\Phi$ is square-integrable. Hence, by Definition \ref{defi:WP}, the boundary map $\varphi$ is Weil--Petersson. This completes the proof.
\end{proof}

\begin{proof}[Proof of Theorem \ref{mainthm}]
The theorem follows immediately from Theorem \ref{mainprop} together with the estimate \eqref{eq:equivalence-bel-and-anti}.
\end{proof}

\bibliographystyle{alpha}
\bibliography{bib}

@book{TakhtajonTeo,
 author = {Takhtajan, Leon A. and Teo, Lee-Peng},
 title = {Weil-{Petersson} metric on the universal {Teichm{\"u}ller} space},
 fseries = {Memoirs of the American Mathematical Society},
 series = {Mem. Am. Math. Soc.},
 issn = {0065-9266},
 volume = {861},
 isbn = {978-0-8218-3936-2; 978-1-4704-0465-9},
 year = {2006},
 publisher = {Providence, RI: American Mathematical Society (AMS)},
 language = {English},
 doi = {10.1090/memo/0861},
 zbMATH = {5063220},
 Zbl = {1243.32010}
}

@article{harmonicdiffeo,
 author = {Tam, Luen-Fai and Wan, Tom Y. H.},
 title = {Quasi-conformal harmonic diffeomorphism and the universal {Teichm{\"u}ller} space},
 fjournal = {Journal of Differential Geometry},
 journal = {J. Differ. Geom.},
 issn = {0022-040X},
 volume = {42},
 number = {2},
 pages = {368--410},
 year = {1995},
 language = {English},
 doi = {10.4310/jdg/1214457235},
 zbMATH = {894386},
 Zbl = {0873.32019}
}

@article{Wan,
 author = {Wan, Tom Y. H.},
 title = {Constant mean curvature surface, harmonic maps, and universal {Teichm{\"u}ller} space},
 fjournal = {Journal of Differential Geometry},
 journal = {J. Differ. Geom.},
 issn = {0022-040X},
 volume = {35},
 number = {3},
 pages = {643--657},
 year = {1992},
 language = {English},
 doi = {10.4310/jdg/1214448260},
 zbMATH = {120216},
 Zbl = {0808.53056}
}

@article{ShenWeilPeterssonTeichmullerspace,
  title = {Weil-{{Petersson Teichmüller}} Space},
  author = {Shen, Yuliang},
  year = 2018,
  journal = {American Journal of Mathematics},
  volume = {140},
  number = {4},
  pages = {1041--1074},
  issn = {0002-9327,1080-6377},
  doi = {10.1353/ajm.2018.0023},
  mrnumber = {3828040}
}

@article{ElEmamSagman2024,
 author = {El Emam, Christian and Sagman, Nathaniel},
 title = {Holomorphic dependence for the {Beltrami} equation in {Sobolev} spaces},
 fjournal = {Proceedings of the American Mathematical Society},
 journal = {Proc. Am. Math. Soc.},
 issn = {0002-9939},
 volume = {154},
 number = {5},
 pages = {1973--1989},
 year = {2026},
 language = {English},
 doi = {10.1090/proc/17402},
 zbMATH = {8188376}
}

@article{cui2000integrably,
  title={Integrably asymptotic affine homeomorphisms of the circle and Teichm{\"u}ller spaces},
  author={Cui, Guizhen},
  journal={Science in China Series A: Mathematics},
  volume={43},
  number={3},
  pages={267--279},
  year={2000},
  publisher={Springer}
}

@book{Beltrami_PDE,
 author = {Astala, Kari and Iwaniec, Tadeusz and Martin, Gaven},
 title = {Elliptic partial differential equations and quasiconformal mappings in the plane},
 fseries = {Princeton Mathematical Series},
 series = {Princeton Math. Ser.},
 volume = {48},
 isbn = {978-0-691-13777-3},
 year = {2009},
 publisher = {Princeton, NJ: Princeton University Press},
 language = {English},
 zbMATH = {5495288},
 Zbl = {1182.30001}
}

@book{PDEBOOK,
 author = {Gilbarg, David and Trudinger, Neil S.},
 title = {Elliptic partial differential equations of second order},
 edition = {Reprint of the 1998 ed.},
 fseries = {Classics in Mathematics},
 series = {Class. Math.},
 issn = {1431-0821},
 isbn = {3-540-41160-7},
 year = {2001},
 publisher = {Berlin: Springer},
 language = {English},
 zbMATH = {1554166},
 Zbl = {1042.35002}
}

@article{Benoist_Hulin,
 author = {Benoist, Yves and Hulin, Dominique},
 title = {Harmonic quasi-isometries of pinched {Hadamard} surfaces are injective},
 fjournal = {Tunisian Journal of Mathematics},
 journal = {Tunis. J. Math.},
 issn = {2576-7658},
 volume = {4},
 number = {2},
 pages = {307--328},
 year = {2022},
 language = {English},
 doi = {10.2140/tunis.2022.4.307},
 zbMATH = {7584410},
 Zbl = {1503.53129}
}

@article{DouadyEarle1986,
  author = {Douady, Adrien and Earle, Clifford J.},
  title = {Conformally Natural Extension of Homeomorphisms of the Circle},
  journal = {Acta Mathematica},
  volume = {157},
  year = {1986},
  pages = {23--48}
}

@article{Bonsante-Schlenker,
 author = {Bonsante, Francesco and Schlenker, Jean-Marc},
 title = {Maximal surfaces and the universal {Teichm{\"u}ller} space},
 fjournal = {Inventiones Mathematicae},
 journal = {Invent. Math.},
 issn = {0020-9910},
 volume = {182},
 number = {2},
 pages = {279--333},
 year = {2010},
 language = {English},
 doi = {10.1007/s00222-010-0263-x},
 zbMATH = {5818340},
 Zbl = {1222.53063}
}

@article{Li-Tam,
 author = {Li, Peter and Tam, Luen-Fai},
 title = {Uniqueness and regularity of proper harmonic maps},
 fjournal = {Annals of Mathematics. Second Series},
 journal = {Ann. Math. (2)},
 issn = {0003-486X},
 volume = {137},
 number = {1},
 pages = {167--201},
 year = {1993},
 language = {English},
 doi = {10.2307/2946622},
 zbMATH = {221042},
 Zbl = {0776.58010}
}

@article{markovic-schoenconjecture,
 author = {Markovic, Vladimir},
 title = {Harmonic maps and the {Schoen} conjecture},
 fjournal = {Journal of the American Mathematical Society},
 journal = {J. Am. Math. Soc.},
 issn = {0894-0347},
 volume = {30},
 number = {3},
 pages = {799--817},
 year = {2017},
 language = {English},
 doi = {10.1090/jams/881},
 url = {resolver.caltech.edu/CaltechAUTHORS:20170505-094324508},
 zbMATH = {6699459},
 Zbl = {1371.53059}
}

@incollection{Schoen1993,
  author    = {Richard Schoen},
  title     = {The role of harmonic mappings in rigidity and deformation problems},
  booktitle = {Complex Geometry (Osaka, 1990)},
  series    = {Lecture Notes in Pure and Applied Mathematics},
  volume    = {143},
  pages     = {179--200},
  publisher = {Dekker},
  address   = {New York},
  year      = {1993}
}

@article{Corlette,
 author = {Corlette, Kevin},
 title = {Flat {G}-bundles with canonical metrics},
 fjournal = {Journal of Differential Geometry},
 journal = {J. Differ. Geom.},
 issn = {0022-040X},
 volume = {28},
 number = {3},
 pages = {361--382},
 year = {1988},
 language = {English},
 doi = {10.4310/jdg/1214442469},
 zbMATH = {4107768},
 Zbl = {0676.58007}
}

@article{wolf-harmonic,
 author = {Wolf, Michael},
 title = {The {Teichm{\"u}ller} theory of harmonic maps},
 fjournal = {Journal of Differential Geometry},
 journal = {J. Differ. Geom.},
 issn = {0022-040X},
 volume = {29},
 number = {2},
 pages = {449--479},
 year = {1989},
 language = {English},
 doi = {10.4310/jdg/1214442885},
 zbMATH = {4069867},
 Zbl = {0655.58009}
}

@article{ells-sampson,
 author = {Eells, James jun. and Sampson, J. H.},
 title = {Harmonic mappings of {Riemannian} manifolds},
 fjournal = {American Journal of Mathematics},
 journal = {Am. J. Math.},
 issn = {0002-9327},
 volume = {86},
 pages = {109--160},
 year = {1964},
 language = {English},
 doi = {10.2307/2373037},
 zbMATH = {3198580},
 Zbl = {0122.40102}
}

@article{bishop,
 author = {Bishop, Christopher J.},
 title = {Weil-{Petersson} curves, {{\(\beta\)}}-numbers, and minimal surfaces},
 fjournal = {Annals of Mathematics. Second Series},
 journal = {Ann. Math. (2)},
 issn = {0003-486X},
 volume = {202},
 number = {1},
 pages = {111--188},
 year = {2025},
 language = {English},
 url = {annals.math.princeton.edu/wp-content/uploads/annals-v202-n1-p02-p.pdf},
 zbMATH = {8063317}
}

@article{renormilized,
 author = {Bridgeman, Martin and Bromberg, Kenneth and Pallete, Franco Vargas and Wang, Yilin},
 title = {Universal {Liouville} action as a renormalized volume and its gradient flow},
 fjournal = {Duke Mathematical Journal},
 journal = {Duke Math. J.},
 issn = {0012-7094},
 volume = {174},
 number = {13},
 pages = {2821--2876},
 year = {2025},
 language = {English},
 doi = {10.1215/00127094-2025-0007},
 zbMATH = {8109701}
}

@misc{arXiv:lowner-welding,
 author = {Shuo Fan and Fredrik Viklund and Yilin Wang},
 title = {On the {Loewner} energy of a welding homeomorphism},
 year = {2026},
 howpublished = {Preprint, {arXiv}:2604.16737 [math.{CV}] (2026)},
 url = {https://arxiv.org/abs/2604.16737},
 arXiv = {arXiv:2604.16737}
}

@misc{arXiv:maximalsurface,
 author = {Farid Diaf and Alex Moriani and Rym Sma{\"{\i}} and Graham Andrew Smith and Enrico Trebeschi},
 title = {Weil--{Petersson} homeomorphisms, minimal lagrangian diffeomorphisms, and maximal surfaces in anti-de {Sitter} space},
 year = {2026},
 howpublished = {Preprint, {arXiv}:2604.17804 [math.{DG}] (2026)},
 url = {https://arxiv.org/abs/2604.17804},
 arXiv = {arXiv:2604.17804}
}

@article{optimisation,
 author = {Wang, Yilin},
 title = {Two optimization problems for the {Loewner} energy},
 fjournal = {Journal of Mathematical Physics},
 journal = {J. Math. Phys.},
 issn = {0022-2488},
 volume = {66},
 number = {2},
 pages = {13},
 note = {Id/No 023502},
 year = {2025},
 language = {English},
 doi = {10.1063/5.0241113},
 zbMATH = {7993804},
 Zbl = {1559.49027}
}

@article{wang-inven,
 author = {Wang, Yilin},
 title = {Equivalent descriptions of the {Loewner} energy},
 fjournal = {Inventiones Mathematicae},
 journal = {Invent. Math.},
 issn = {0020-9910},
 volume = {218},
 number = {2},
 pages = {573--621},
 year = {2019},
 language = {English},
 doi = {10.1007/s00222-019-00887-0},
 zbMATH = {7114403},
 Zbl = {1435.30074}
}

@article{lowner2,
 author = {Viklund, Fredrik and Wang, Yilin},
 title = {Interplay between {Loewner} and {Dirichlet} energies via conformal welding and flow-lines},
 fjournal = {Geometric and Functional Analysis. GAFA},
 journal = {Geom. Funct. Anal.},
 issn = {1016-443X},
 volume = {30},
 number = {1},
 pages = {289--321},
 year = {2020},
 language = {English},
 doi = {10.1007/s00039-020-00521-9},
 zbMATH = {7184230},
 Zbl = {1436.30009}
}

@book{hubbard,
 author = {Hubbard, John Hamal},
 title = {Teichm{\"u}ller theory and applications to geometry, topology, and dynamics. {Volume} 1: {Teichm{\"u}ller} theory. {With} contributions by {Adrien} {Douady}, {William} {Dunbar}, and {Roland} {Roeder}, {Sylvain} {Bonnot}, {David} {Brown}, {Allen} {Hatcher}, {Chris} {Hruska}, {Sudeb} {Mitra}},
 isbn = {0-9715766-2-9},
 year = {2006},
 publisher = {Ithaca, NY: Matrix Editions},
 language = {English},
 zbMATH = {5042912},
 Zbl = {1102.30001}
}

@article{bdms,
 author = {Bonsante, Francesco and Danciger, Jeffrey and Maloni, Sara and Schlenker, Jean-Marc},
 title = {The induced metric on the boundary of the convex hull of a quasicircle in hyperbolic and anti-de {Sitter} geometry},
 fjournal = {Geometry \& Topology},
 journal = {Geom. Topol.},
 issn = {1465-3060},
 volume = {25},
 number = {6},
 pages = {2827--2911},
 year = {2021},
 language = {English},
 doi = {10.2140/gt.2021.25.2827},
 zbMATH = {7442194},
 Zbl = {1483.30084}
}

@book{Markovic-fletcher-book,
 author = {Fletcher, Alastair and Markovic, Vladimir},
 title = {Quasiconformal maps and {Teichm{\"u}ller} theory},
 fseries = {Oxford Graduate Texts in Mathematics},
 series = {Oxf. Grad. Texts Math.},
 volume = {11},
 isbn = {0-19-856926-2},
 year = {2007},
 publisher = {Oxford: Oxford University Press},
 language = {English},
 zbMATH = {5076257},
 Zbl = {1103.30002}
}

@article{benoist-hulin-jems,
 author = {Benoist, Yves and Hulin, Dominique},
 title = {Harmonic quasi-isometric maps. {II}: {Negatively} curved manifolds},
 fjournal = {Journal of the European Mathematical Society (JEMS)},
 journal = {J. Eur. Math. Soc. (JEMS)},
 issn = {1435-9855},
 volume = {23},
 number = {9},
 pages = {2861--2911},
 year = {2021},
 language = {English},
 doi = {10.4171/JEMS/1065},
 zbMATH = {7367680},
 Zbl = {1482.53082}
}

@article{Symmetric-harmonic,
 author = {Yao, Guowu},
 title = {Convergence of harmonic maps on the {Poincar{\'e}} disk},
 fjournal = {Proceedings of the American Mathematical Society},
 journal = {Proc. Am. Math. Soc.},
 issn = {0002-9939},
 volume = {132},
 number = {8},
 pages = {2483--2493},
 year = {2004},
 language = {English},
 doi = {10.1090/S0002-9939-04-07465-9},
 zbMATH = {2091058},
 Zbl = {1127.30009}
}

\end{document}